\theoremstyle{plain}
\newtheorem{theorem}{Theorem}[section]
\newtheorem{proposition}[theorem]{Proposition}
\newtheorem{remark}[theorem]{Remark}
\newtheorem{corollary}[theorem]{Corollary}
\theoremstyle{definition}
\newcommand{\D}{{\mathcal D}}
\newcommand{\Q}{{\mathcal Q}}
\newcommand{\area}{{\texttt{area}}}
\renewcommand*\env@matrix[1][*\c@MaxMatrixCols c]{%
  \hskip -\arraycolsep
  \let\@ifnextchar\new@ifnextchar
  \array{#1}}
\def\moverlay{\mathpalette\mov@rlay}
\def\mov@rlay#1#2{\leavevmode\vtop{%
   \baselineskip\z@skip \lineskiplimit-\maxdimen
   \ialign{\hfil$\m@th#1##$\hfil\cr#2\crcr}}}
\newcommand{\charfusion}[3][\mathord]{
    #1{\ifx#1\mathop\vphantom{#2}\fi
        \mathpalette\mov@rlay{#2\cr#3}
      }
    \ifx#1\mathop\expandafter\displaylimits\fi}
\title[Restricted Dyck Paths]{Restricted Dyck Paths on Valleys Sequence}
\author[R. Fl\'orez]{Rigoberto Fl\'orez}\thanks{}
\address{\noindent Department of Mathematical Sciences,  The Citadel,  Charleston, SC, U.S.A.}
\email{rigo.florez@citadel.edu}
\author[T. Mansour]{Toufik Mansour}
\address{Department of Mathematics, University of Haifa,
3498838 Haifa, Israel}
\email{tmansour@univ.haifa.ac.il}
\author[J. L. Ram\'irez]{Jos\'e L. Ram\'{\i}rez}
\address{\noindent Departamento de Matem\'aticas, Universidad Nacional de Colombia, Bogot\'a,  COLOMBIA}
\email{jlramirezr@unal.edu.co}
\author[F. A. Velandia]{Fabio A. Velandia}
\address{\noindent Departamento de Matem\'aticas, Universidad Nacional de Colombia, Bogot\'a,  COLOMBIA}
\email{fvelandias@unal.edu.co}
\author[D. Villamizar]{Diego Villamizar}
\address{Department of Mathematics,
Tulane University, New Orleans, LA, U.S.A.}
\email{dvillami@tulane.edu}
\date{\today}
\subjclass[2010]{Primary 05A15; Secondary 05A19}
\keywords{Dyck path, $d$-Dyck path, generating function.}
\begin{document}
\begin{abstract}
In this paper we study a subfamily of a classic lattice path, the \emph{Dyck paths}, called \emph{restricted $d$-Dyck} paths, in short $d$-Dyck.
A valley of a Dyck path $P$ is a local minimum of $P$;  if the difference between the heights of two consecutive valleys (from left to right) is at
least $d$, we say that $P$ is a restricted $d$-Dyck path. The \emph{area} of a Dyck path is the sum of the  absolute values of  $y$-components
of all points in the path. We find the number of peaks and the area of all paths of a given length in the set of $d$-Dyck paths. We give a   
bivariate generating function to count the number of the $d$-Dyck paths with respect to the the semi-length and number of peaks. After that,    
we analyze  in detail the case $d=-1$. Among other things, we give both, the generating function and a recursive relation for the total area.
\end{abstract}

\maketitle

\section{Introduction}

A classic concept, the \emph{Dyck paths}, has been widely studied. Recently, a subfamily of these paths, non-decreasing Dyck paths, has received certain
level of interest  due to the good behavior of its recursive relations and generating functions. In this paper we keep
studying a generalization of  the non-decreasing Dyck paths. Other generalizations of non-drecreasing Dyck paths have been given for Motzkin paths and for \L{}ukasiewicz paths  \cite{RigoJoseLuis,RigoJoseLuisRational}.

We now recall, to avoid ambiguities, some important definitions that we need in this paper. A Dyck path is a lattice path in the first quadrant of the $xy$-plane that starts at the origin, ends on the $x$-axis, and consists of (the same number of) North-East steps  $U:=(1,1)$ and South-East steps $D:=(1,-1)$. The \emph{semi-length} of a path is the total number of $U$'s that the path has. A \emph{valley} (\emph{peak}) is a subpath of the form $DU$
($UD$) and the \emph{valley vertex} of  $DU$ is the lowest point (a local minimum) of $DU$. Following \cite{FloRamVelVilPolyominoes, FloRamVelVil} we define the \emph{valley vertices} of a Dyck path $P$ as the vector
$\nu=(\nu_1, \nu_2, \dots, \nu_k)$ formed by all $y$-coordinates (listed from left to right) of all valley vertices of $P$. For further recent work about different combinatorial aspects of Dyck paths, see for instance \cite{Baril, Baril2,  Blecher, RigoJoseLuisSym, Manes, Sergi}.

For a fixed $d \in \mathbb{Z}$,
a Dyck path $P$ is called  \emph{restricted $d$-Dyck} or  \emph{$d$-Dyck}  (for simplicity),  if either $P$  has at most one valley, or  if its valley vertex
vector $\nu$ satisfies that $\nu_{i+1}-\nu_i\geq d$, where   $1\le i <k$.  The set of all $d$-Dyck paths is denoted by $\D_{d}$, the set of all
$d$-Dyck paths of semi-length $n$ is denoted $\D_{d}(n)$, and the cardinality of $\D_{d}(n)$ is  denoted by $r_d(n)$.

The first well-known example of these paths is the set of $0$-Dyck paths; in the literature,
\cite{barcucci,CZ,CZ2,DeutscheProdinger, ElizaldeFlorezRamirez,FlorezJose}, this family is known as non-decreasing Dyck  paths.  The whole family of Dyck paths can
be seen as a limit of $d$-Dyck and it occurs occurs when $d\to -\infty$. Another example is from Figure \ref{Example}; we observe that $\nu=(0,1,0,3,4,3,2)$ and that
$\nu_{i+1}-\nu_i\ge -1$, for $i=1, \dots, 6$, so the figure depicts a $(-1)$-Dyck path of length 28 (or semi-length 14).

\begin{figure} [htbp]
\begin{center}
\includegraphics[scale=0.9]{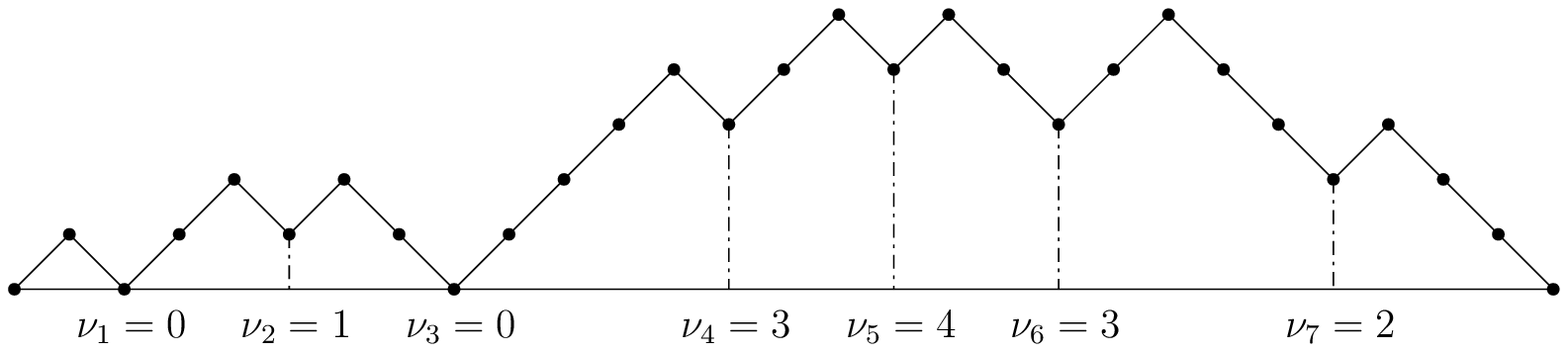}
\end{center}
\caption{A $(-1)$-Dyck path of length 28.} \label{Example}
\end{figure}

The recurrence relations and/or the generating functions for $d$-Dyck when $d\ge 0$ have different behavior than the case  $d< 0$. For example, the generating
functions  for known aspects, in $d$-Dyck when $d\ge 0$, are all rational (see \cite{barcucci,CZ,CZ2,ElizaldeFlorezRamirez,FlorezJose,FloRamVelVilPolyominoes, FloRamVelVil}).
However, the aspects that we analyze in this paper, when $d< 0$, give that the generating functions are all algebraic (non-rational).  
In this paper we give a bivariate generating function to count the number of paths    
in $\D_{d}(n)$, for $d\le 0$,  with respect to the number of peaks and semi-length. We also give a relationship  between the total number of   
$d$-Dyck paths and the Catalan numbers. Additionally, we give an explicit  symbolic expression for the generating function with respect to the   
semi-length.  For the particular case $d=-1$ we give a combinatorial expression and a recursive relation for the total number of paths.  
We also analyze the asymptotic behavior for the sequence $r_{-1}(n)$. It would be very interesting if we can understand better the behavior of  $d$-Dyck paths for $d<-1$. 

The \emph{area} of a Dyck path is the sum of the  absolute values of  $y$-components of all points in the path. That is,  the area  of a  Dyck path
 corresponds to the surface area under the paths and above of the $x$-axis.  For example, the path $P$ in Figure \ref{Example} satisfies that  $\area(P)=70$. We use generating functions and recursive relations to analyze the distribution of the area of all paths
in  $\D_{-1}(n)$.

 \section{Number of $d$-Dyck paths and Peaks Statistic}
Given a family of lattice paths, a classic question is how many lattice paths are there of certain length, and a second classic question is
how many peaks are there depending on the length of the path. These questions have been completely answered, for instance, for Dyck paths  
\cite{Deutschenumeration},  $d$-Dyck paths for $d\ge 0$ \cite{barcucci, FloRamVelVil},  and Motzkin paths \cite{sapounakis} among others.  
In this section we give a bivariate generating  function to  enumerate the peaks and semi-length of the $d$-Dyck paths for $d< 0$. 

We now give some notation needed for this paper, including the parameters needed for the generating function in this section. The   
\emph{level} of a valley is the $y$-component of its valley vertex.  We recall that the set of all $d$-Dyck paths is denoted by $\D_{d}$; the set   
of all $d$-Dyck paths of semi-length $n$ is denoted $\D_{d}(n)$, and the cardinality of $\D_{d}(n)$ is  denoted by $r_d(n)$. Given a 
$d$-Dyck path $P$, we denote the semi-length of $P$ by $\ell(P)$ and denote the number of peaks of $P$ by $\rho(P)$.  So,  the 
bivariate generating function to count the number of paths and peaks of $d$-Dyck paths is defined by
 $$L_d(x, y):=\sum_{P\in \D_d}x^{\ell(P)}y^{\rho(P)}.$$

 \subsection{Some facts known when $d\geq 0$.}
These results can be found in \cite{FloRamVelVil}.
\begin{itemize}
\item
If $d\geq 0$, then the generating function $F_d(x, y)$ is given by 
  $$L_d(x, y)=1  + \frac{xy(1-2x+x^2+xy-x^{d+1}y)}{(1-x)(1-2x+x^2-x^{d+1}y)}.$$
\item If  $d\geq 1$,
$$r_d(n)=\sum_{k=0}^{\lfloor\frac{n+d-2}{d}\rfloor}\binom{n-(d-1)(k-1)}{2k}.$$
\item If $n> d$, then   we have the recursive relation
$$r_d(n)=2r_d(n-1)-r_d(n-2)+r_d(n-d-1),$$
with the initial values $r_d(n)=\binom n2 +1$, for $0\leq n \leq d$.

\item Let $p_d(n,k)$ be the number of $d$-Dyck paths of semi-length $n$, having exactly $k$ peaks.
If $d\geq 0$, then
$$p_d(n,k)=\binom{n+k-d(k-2)-2}{2(k-1)}.$$
For the whole set of Dyck paths, the number $p_{-\infty}(n,k)$, is given by the Narayana numbers $N(n,k)=\frac{1}{n}\binom nk \binom{n}{k-1}$.

\end{itemize}

\subsection{Peaks statistic for $d$ a negative integer}  For the remaining part of the paper we consider only the case $d<0$ and use  $e$ to
denote $|d|$.  

\begin{theorem}\label{mainGF} If $d$ is a negative integer and $e:=|d|$, then
the generating function $L_e(x, y)$ satisfies the functional equation
\begin{align}
  L_e(x,y)= xy+xL_e(x,y)+xS_e(x,y)L_e(x,y), \label{eqSys1}
\end{align}
where $S_e(x)$ satisfies the algebraic equation
\[
(1-xS_e(x,y))^e(y+(1-y)xS_e(x,y))-S_e(x,y)(1-xS_e(x,y))^{e+1}-\frac{x^{e+2}y}{1-x}S_e(x,y)=0.
\]
\end{theorem}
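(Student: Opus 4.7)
The plan is to combine an arch decomposition of $d$-Dyck paths with a peel-off-outer-$UD$ decomposition of arches, refine both by the height of the last valley, and finally iterate the resulting recursion to obtain the algebraic equation for $S_e$.

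Every non-empty $d$-Dyck path $P$ factors uniquely as a concatenation of Dyck arches, $P = A_1 A_2 \cdots A_m$, where each $A_i$ touches the $x$-axis only at its endpoints. Each junction between consecutive arches creates a valley at height $0$, so the $d$-Dyck valley condition on $P$ is equivalent to: each $A_i$ is internally a $d$-Dyck arch, and for $i < m$ the last valley of $A_i$ (if any) is at most $e$. Call such an arch \emph{admissible} and let $A_e^{\leq e}(x,y)$ denote the GF of admissible arches; with $A_e(x,y)$ the GF of all $d$-Dyck arches, we get $L_e = A_e/(1 - A_e^{\leq e})$. A second factorization peels off the outer $U$-$D$ pair: any arch is $A = U P D$ with $P$ a (possibly empty) $d$-Dyck path, whose peaks and valleys correspond bijectively to those of $A$ (for non-empty $P$, the valleys are shifted up by one; the empty case yields $A = UD$). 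This gives $A_e = xy + xL_e$. Setting $S_e := A_e^{\leq e}/x$ and combining yields the first equation of the theorem.

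For the equation satisfied by $S_e$, refine by the height of the last valley. For $h \geq -1$, let $L_e^{\leq h}(x,y)$ be the GF of $d$-Dyck paths whose last valley is at most $h$; single-peak paths have no valley and are counted for every $h \geq -1$. Exactly as above, for $h \geq 0$,
\[
L_e^{\leq h} = \frac{A_e^{\leq h}}{1 - A_e^{\leq e}} \qquad \text{and} \qquad A_e^{\leq h} = xy + xL_e^{\leq h-1},
\]
with base case $L_e^{\leq -1} = xy/(1-x)$. The first identity requires checking that the last valley of $P = A_1\cdots A_m$ is determined by its last arch: it equals the last internal valley of $A_m$ when $A_m$ has at least two peaks, equals the junction valley at height $0$ when $A_m$ is single-peak and $m \geq 2$, and is absent when $m = 1$ and $A_m$ is single-peak; in every case $A_m \in A_e^{\leq h}$ is equivalent to $P \in L_e^{\leq h}$.

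Set $\rho := (1 - xS_e)^{-1}$; combining the two refined relations gives the recursion $L_e^{\leq h} = xy\rho + x\rho\, L_e^{\leq h-1}$. Iterating from $h = 0$ up to $h = e - 1$ telescopes to
\[
L_e^{\leq e-1} = \frac{xy\rho\bigl(1 - (x\rho)^e\bigr)}{1 - x\rho} + (x\rho)^e \cdot \frac{xy}{1 - x}.
\]
Since $A_e^{\leq e} = xS_e = xy + xL_e^{\leq e-1}$, equivalently $S_e - y = L_e^{\leq e-1}$, substituting $\rho = 1/(1 - xS_e)$ and clearing denominators by multiplying through by $(1 - xS_e)^e (1 - xS_e - x)(1 - x)$, then simplifying using the identity $(1 - xS_e) - 1 = -xS_e$, produces exactly the stated algebraic equation for $S_e$. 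The main obstacle is the refined arch-decomposition identity above; once it is established the iteration and subsequent algebraic simplification are routine, albeit slightly tedious.
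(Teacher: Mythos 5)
Your proposal is correct and follows essentially the same route as the paper: both rest on the first-return/arch decomposition together with a stratification of paths by the level of the last valley, the only real difference being that you work with the cumulative series $L_e^{\le h}$ and telescope the one-step recursion, whereas the paper sets up and solves the equivalent linear system for the exact-level series $Q_i^{(e)}$ (your $L_e^{\le h}-L_e^{\le h-1}$). One small point: after clearing denominators your polynomial is $-(1-x)$ times the equation as stated in the theorem, so a final division by this unit is needed to land on the exact stated form.
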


\begin{proof} We start this proof introducing some needed notation. The set $\Q_{d,i} \subseteq \D_{d}$ denotes the family of nonempty paths
where the last valley is at level $i$. We consider the generating function
$$Q_i^{(e)}(x,y):=\sum_{P\in \Q_{d,i}}x^{\ell(P)}y^{\rho(P)}.$$
It is convenient to consider the sum over the $Q^{(e)}_i(x,y)$. We also consider the generating function, with respect  to the lengths and peaks, that counts  the $d$-Dyck paths  that have either no valleys or the last valley is at level less than $e$. That is, 
\begin{align}\label{defSe}
S_e(x,y)=\frac{y}{1-x}+\sum _{j=0}^{e-1}Q^{(e)}_j(x,y).
\end{align}
 A path  $P$ can be uniquely decomposed as either
$UD, UTD$,   or $UQDT$ (by considering the first return decomposition), where $T\in\D_{d}$ and $Q$ is either a path without valleys or is a path in $\cup_{i=0}^{e-1}\Q_{d,i}$ (see Figure \ref{Fig3}, for a graphical representation of this decomposition). Notice that the decomposition $UQDT$ ensures that the condition $\nu_{i+1}-\nu_i\geq d$ holds for all $i\geq 1$.

\begin{figure} [htbp]
\begin{center}
\includegraphics[scale=0.8]{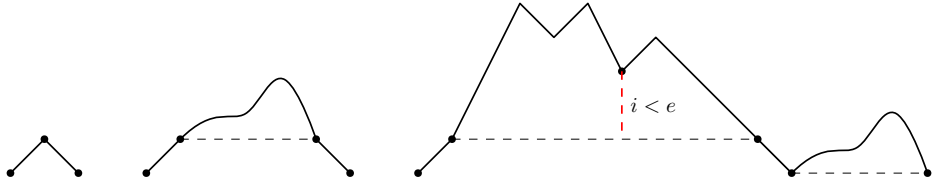}
\end{center}
\caption{Decomposition of a $d$-Dyck path.} \label{Fig3}
\end{figure}

 From the symbolic method we obtain the functional equation
$$  L_e(x,y)= xy+xL_e(x,y)+xS_e(x,y)L_e(x,y).$$
Now we are going to obtain a system of equations  for the generating functions $Q_i(x,y)$. Let $Q$ be a path in the set $\Q_{d,i}$. If $i=0$, then the path $Q$ can be decomposed uniquely as either
$UQ'D\Delta$   or $UQ'DR$, where $\Delta$ is a pyramid, $R$ is a path in $\Q_{d,0}$, and  $Q'$ is either a path without valleys or $Q'\in \cup_{i=0}^{e-1}\Q_{d,i}$.
Therefore, we have the functional equation
$$Q_0^{(e)}(x,y)=xS_e(x,y)\frac{xy}{1-x}+xS_e(x,y)Q_0^{(e)}(x,y).$$
For $i>0$, any path  $Q$ can be decomposed uniquely in one of  these two forms $UR_1D$  or $UQDR_2$,
where $R_1\in \Q_{d,i-1}, R_2 \in \Q_{d,i}$,  and $Q$ is either a path without valleys or $Q\in \cup_{i=0}^{e-1}\Q_{d,i}$.
So, we have the functional equation
$$Q_i^{(e)}(x,y)=xQ_{i-1}^{(e)}(x,y)+xS_e(x)Q_i^{(e)}(x,y).$$

Summarizing the above discussion, we obtain the system of equations:
\begin{align}
\label{eqSys2}
\begin{cases}
Q_0^{(e)}(x,y)&=xS_e(x,y)\frac{xy}{1-x}+xS_e(x,y)Q_0^{(e)}(x,y)\\
    Q_1^{(e)}(x,y) &=xQ_0^{(e)}(x,y)+xS_e(x,y)Q_1^{(e)}(x,y)\\
    &\ \vdots\\
    Q_i^{(e)}(x,y)&=xQ_{i-1}^{(e)}(x,y)+xS_e(x,y)Q_i^{(e)}(x,y)\\
    &\ \vdots \\
    Q_{e-1}^{(e)}(x,y)&=xQ_{e-2}^{(e)}(x,y)+xS_e(x,y)Q_{e-1}^{(e)}(x,y).
    \end{cases}
    \end{align}
Summing the equations in \eqref{eqSys2}, we obtain that
$$\sum_{j=0}^{e-1}Q_j^{(e)}(x,y)=xS_e(x,y)\left(\sum_{j=0}^{e-1}Q_j^{(e)}(x,y)+\frac{xy}{1-x}\right)+x\sum_{j=0}^{e-2}Q_j^{(e)}(x,y).$$
From this and \eqref{defSe} we have
\begin{multline}\label{eqSex}
    S_e(x,y)-\frac{y}{1-x}=x\left (S_e(x,y)-\frac{y}{1-x}-Q_{e-1}^{(e)}(x,y)\right)\\+xS_e(x,y)\left(S_e(x,y)-\frac{y}{1-x}\right )+\frac{x^2y}{1-x}S_e(x,y).
\end{multline}
Now solving this in previous equation for $S_e(x,y)$ we have
\begin{align}\label{FunEcS}
S_e(x,y)=\frac{1-x+xy-\sqrt{1-2x+x^2-2xy-2x^2y+x^2y^2+4x^2Q_{e-1}^{(e)}(x,y)}}{2x}.
\end{align}
Notice that all of the $Q_i^{(e)}(x,y)$, with $i\geq 0$, can be expressed as
\begin{align}\label{FunEcQ}
Q_i^{(e)}(x,y)=\frac{x^{i+2}yS_e(x,y)}{(1-x)(1-xS_e(x,y))^{i+1}}.
\end{align}
Substituting \eqref{FunEcQ} into \eqref{eqSex} we obtain the desired functional equation.
\end{proof}

We observe that substituting   \eqref{FunEcS} into  \eqref{eqSys1}, we have
\begin{eqnarray*}
L_e(x,y)&=&\dfrac{xy}{1-x-xS_e(x,y)}\\
&=&\dfrac{xy}{1-x-\dfrac{1-x+xy-\sqrt{1-2x+x^2-2xy-2x^2y+x^2y^2+4x^2Q_{e-1}^{(e)}(x,y)}}{2}}.
\end{eqnarray*}
From the  combinatorial description of $Q^{(e)}_{e-1}(x,y)$, we obtain  $Q_{e-1}^{(e)}(x,y)\longrightarrow  0$, as $e\longrightarrow \infty$. Therefore,
$$\lim_{e\to \infty} L_e(x,y)=\lim_{e\to \infty}\frac{xy}{1-x-xS_e(x,y)}=\frac{1-x-xy-\sqrt{1 - 2 x + x^2 - 2 x y - 2 x^2 y + x^2 y^2}}{2x}.$$
This last generating function is the distribution of the Narayan sequence. This corroborates that the restricted $(-\infty)$-Dyck paths coincides
with the non-empty Dyck paths.

 \begin{theorem}
 If $1\leq k \leq |d|+3$, then the $k$-th coefficient of the generating function  $L_e(x,1)$ coincides with the  Catalan number $C_k$.
 \end{theorem}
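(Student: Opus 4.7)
The plan is to establish the stronger geometric fact that every Dyck path of semi-length at most $e+3$ already belongs to $\D_d$. Since $\D_d(n)\subseteq \D(n)$ for all $n$, this immediately yields $r_d(n)=C_n$ for $1\le n\le e+3$, which is precisely the claimed identification of the first $e+3$ coefficients of $L_e(x,1)$ with Catalan numbers (recall that $L_e(x,1)=\sum_{n\ge 1}r_d(n)x^n$).

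First I would restate the goal in contrapositive form: it suffices to show that any Dyck path $P\notin \D_d$ satisfies $\ell(P)\ge e+4$. By the definition of a $d$-Dyck path with $d=-e$, such a $P$ must have at least two valleys and must admit a pair of consecutive valley vertices whose heights $\nu_i,\nu_{i+1}$ satisfy $\nu_i-\nu_{i+1}\ge e+1$. Combined with the unavoidable inequality $\nu_{i+1}\ge 0$, this forces $\nu_i\ge e+1$.

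The core of the argument is then a disjoint accounting of up-steps in three locations of $P$. Just before the $D$-step arriving at valley $\nu_i$, the walker stands at height $\nu_i+1\ge e+2$, and since the number of $U$-steps used to reach any vertex is at least the current height (as up-steps minus down-steps equals the height), at least $e+2$ up-steps have already occurred. The $U$-step that completes the $DU$ at valley $i$ contributes one further up-step, and the analogous $U$-step for valley $i+1$ contributes yet another. These three groups of up-steps lie in pairwise disjoint segments of $P$ (respectively: before valley $i$, at valley $i$, and at valley $i+1$), so $\ell(P)\ge(e+2)+1+1=e+4$, as required.

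I do not anticipate a genuine obstacle: the only care needed is the bookkeeping that the counted up-steps are truly disjoint and the correct use of the elementary ``height $\le$ number of up-steps used so far'' inequality. Once the contrapositive is in place, restricting from $\D(k)$ to $\D_d(k)$ gives $r_d(k)=C_k$ for $1\le k\le e+3$, completing the proof. As a sanity check, the extremal construction $U^{e+2}DUD^{e+2}UD$ is a Dyck path of semi-length $e+4$ with valleys at heights $e+1$ and $0$, showing the bound $e+4$ is sharp and that $r_d(e+4)<C_{e+4}$ in general.
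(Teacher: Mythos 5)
Your proof is correct and takes essentially the same approach as the paper: both arguments rest on the fact that the shortest Dyck path containing a forbidden pair of consecutive valleys has semi-length $e+4$, so that every Dyck path of semi-length $k\le e+3$ lies in $\D_d$ and hence $r_d(k)=C_k$. In fact, your disjoint accounting of up-steps supplies a rigorous justification for the minimality claim that the paper merely asserts by exhibiting the extremal example $U^{e+2}DUD^{e+2}UD$.
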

\begin{proof}  We first observe that the shortest Dyck path that contains a forbidden sequence of valleys is $P=U^{e+2}DUD^{e+2}UD$ (clearly,
$\ell (P)=e+4$) with $e=|d|$. Therefore,  if $d<0$, then $r_d(n)=C_n$ for $n=1, 2, \dots, |d|+3$.
\end{proof}
The first few values for the sequence $r_d(n)$, for $d\in \{-1, -2, -3, -4\}$ are
\begin{align*}
\{r_{-1}(n)\}_{n\geq 1}&=\{\textbf{1}, \, \textbf{2}, \, \textbf{5}, \, \textbf{14}, \, 41, \, 123, \, 375, \, 1157, \, 3603, \dots \}, \\
\{r_{-2}(n)\}_{n\geq 1}&=\{\textbf{1}, \, \textbf{2}, \, \textbf{5}, \, \textbf{14}, \, \textbf{42}, \, 131, \, 419, \, 1365, \, 4511, \dots \}, \\
\{r_{-3}(n)\}_{n\geq 1}&=\{\textbf{1}, \, \textbf{2}, \,  \textbf{5}, \,  \textbf{14}, \,  \textbf{42}, \,  \textbf{132}, \,  428, \,  1419, \,  4785, \dots \}, \\
\{r_{-4}(n)\}_{n\geq 1}&=\{\textbf{1}, \, \textbf{2}, \, \textbf{5},\,  \textbf{14}, \, \textbf{42},\,  \textbf{132}, \, \textbf{429}, \, 1429, \, 4850, \dots \}.
\end{align*}
For example,  there are $41$ $(-1)$-Dyck paths out of the $42$ Dyck paths of length $10$. The Figure \ref{DyckF}, depicts the only Dyck path of length 10 that is not a $(-1)$-Dyck path.

\begin{figure} [htbp]
\begin{center}
\includegraphics[scale=1.3]{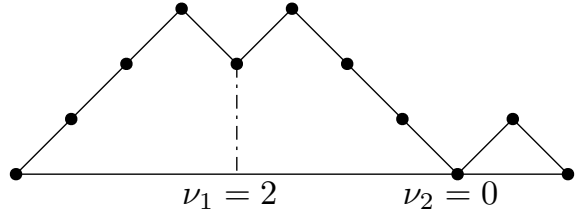}
\end{center}
\caption{The only Dyck path of length 10 that is not a $(-1)$-Dyck path.} \label{DyckF}
\end{figure}

Recall that $d$ is a negative integer and that $e:=|d|$. Then by Theorem \ref{mainGF}, we have
\begin{align*}
(L_e(x,y)+y)^e&\left(xL^2_e(x,y)+(xy+x-1)L_e(x,y)+xy\right)\\
&-\frac{x}{1-x}((1-x)L_e(x,y)-xy)(L_e(x,y))^{e+1}=0.
\end{align*}
This implies that 
\begin{multline*}
\sum_{j=2}^{e+1}x\binom{e}{j-2}y^{e+2-j}(L_e(x,y))^j+\sum_{j=1}^{e+1}(xy+x-1)\binom{e}{j-1}y^{e+1-j}(L_e(x,y))^j\\
-\sum_{j=0}^ex\binom{e}{j}y^{e+1-j}(L_e(x,y))^j+\frac{x^2y}{1-x}(L_e(x,y))^{e+1}=0.
\end{multline*}
Hence, by taking $y=1$, we have
$$L_e(x,1)=Z\left(a_0+\sum_{j=2}^{e+1}a_j(x)(L_e(x,1))^j\right),$$
where $Z=1$, and
\begin{align*}
a_0&=\frac{x}{1-(e+2)x},\\
a_j&=\frac{1}{1-(e+2)x}\left(x\binom{e+2}{j}-\binom{e}{j-1}\right),\quad j=2,3,\ldots,e,\\
a_{e+1}&=\frac{(e+2)x(1-x)-1+x(1+x)}{(1-x)(1-(e+2)x)}.
\end{align*}
Hence, by the Lagrange inversion formula, we expand the generating function $L_e(x,1)$ as a power series in $Z$ to obtain 
\begin{align*}
L_e(x,1)&=\sum_{n\geq1}\frac{[Z^{n-1}]}{n}
\sum_{i_0+i_2+i_3+\cdots+i_{e+1}=n}\frac{n!}{i_0!i_2!\cdots i_{e+1}!}a_0^{i_0}Z^{2i_2+\cdots+(e+1)i_{e+1}}\prod_{j=2}^{e+1}a_j^{i_j},
\end{align*}
that leads to the following result.
\begin{theorem}\label{thLfe}
We have 
\begin{align*}
L_e(x,1)&=\sum_{n\geq1}\frac{\sum_{2i_2+\cdots+(e+1)i_{e+1}=n-1}\binom{n}{i_2,\ldots,i_{e+1}}x^{n-i_2-\cdots-i_{e+1}}t^{i_{e+1}}\prod_{j=2}^{e}\left(x\binom{e+2}{j}-\binom{e}{j-1}\right)^{i_j}}{n(1-(e+2)x)^n},
\end{align*}
where 
\[ \binom{n}{i_2,\ldots,i_{e+1}}=\frac{n!}{i_2!\cdots i_{e+1}!(n-i_2-\cdots-i_{e+1})!} \text{ and } t=\frac{(e+2)x(1-x)-1+x(1+x)}{1-x}.\]
\end{theorem}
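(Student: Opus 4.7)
The plan is to deduce Theorem \ref{thLfe} by applying the Lagrange inversion formula to the polynomial functional equation for $L_e(x,1)$ already displayed just before the statement. That equation has the shape $L = Z\phi(L)$ with $\phi(L) = a_0 + \sum_{j=2}^{e+1} a_j L^j$ and the formal marker $Z=1$ inserted so that each occurrence of $L_e(x,1)$ on the right is tagged. First I would verify that eliminating $S_e(x,y)$ from the two identities of Theorem \ref{mainGF} and then specializing $y = 1$ does yield exactly the polynomial identity shown, with the coefficients $a_0, a_2, \ldots, a_{e+1}$ as listed; this is the main algebraic content of the argument, and the remaining steps are mechanical.

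Since $\phi(0) = a_0 \neq 0$, the Lagrange inversion formula gives $[Z^n] L = \frac{1}{n}[L^{n-1}]\phi(L)^n$ for $n \geq 1$, so that $L_e(x,1) = \sum_{n \geq 1} Z^n \cdot \frac{1}{n}[L^{n-1}]\phi(L)^n$ evaluated at $Z = 1$. Next I would expand
\[
\phi(L)^n = \sum_{i_0 + i_2 + \cdots + i_{e+1} = n} \binom{n}{i_0, i_2, \ldots, i_{e+1}} a_0^{i_0} \prod_{j=2}^{e+1} a_j^{i_j}\, L^{2i_2 + \cdots + (e+1)i_{e+1}}
\]
by the multinomial theorem and extract the coefficient of $L^{n-1}$, which forces $2i_2 + 3i_3 + \cdots + (e+1)i_{e+1} = n - 1$ and determines $i_0 = n - (i_2 + \cdots + i_{e+1})$. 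Substituting the explicit values of $a_0$, $a_2, \ldots, a_e$, and $a_{e+1}$, the factors of $(1 - (e+2)x)^{-1}$ combine into $(1 - (e+2)x)^{-n}$, the factor $a_0^{i_0}$ produces $x^{n - i_2 - \cdots - i_{e+1}}$, the factor $a_{e+1}^{i_{e+1}}$ contributes $t^{i_{e+1}}$, and the remaining $a_j^{i_j}$ reproduce the product over $j = 2, \ldots, e$ in the statement.

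The hard part will be the algebraic reduction hidden in the first step: carefully tracking the degree-$(e+1)$ polynomial identity in $L_e(x,y)$ obtained from Theorem \ref{mainGF}, and extracting the stated closed-form coefficients at $y = 1$. Once that identity is in place, the Lagrange inversion together with the multinomial expansion is routine. A minor technical point worth mentioning is that the constraint $2i_2 + \cdots + (e+1)i_{e+1} = n - 1$ with $i_j \geq 0$ automatically forces $i_0 = n - (i_2 + \cdots + i_{e+1}) \geq (n+1)/2 \geq 1$, so no positivity condition is lost when the final multinomial coefficient is written as $\binom{n}{i_2, \ldots, i_{e+1}}$ with $i_0$ suppressed.
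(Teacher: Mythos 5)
Your proposal follows the paper's own route exactly: the authors derive the same degree-$(e+1)$ polynomial equation in $L_e(x,y)$ from Theorem \ref{mainGF}, set $y=1$ to obtain $L_e(x,1)=Z\bigl(a_0+\sum_{j=2}^{e+1}a_j L_e(x,1)^j\bigr)$ with $Z=1$ and the same coefficients $a_0,a_2,\ldots,a_{e+1}$, and then apply Lagrange inversion with the multinomial expansion of $\phi(L)^n$ to extract the coefficient of $L^{n-1}$. Your argument is correct, and your remark that the constraint $2i_2+\cdots+(e+1)i_{e+1}=n-1$ forces $i_0\geq 1$ is a small but accurate point that the paper leaves implicit.
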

For example, Theorem \ref{thLfe} with $e=2$ gives
\[ L_2(x,1)=\sum_{n\geq1}\frac{\sum_{2i_2+3i_3=n-1}\binom{n}{i_2,i_3}x^{n-i_2-i_3} (6x-2)^{i_2}(\frac{-3x^2+5x-1}{1-x})^{i_3}}{n(1-4x)^n}.\]
Thus, 
\begin{multline*}L_2(x,1)=\frac{x}{1-4x}+\frac{x^2(6x-2)}{(1-4x)^3}+\frac{x^3t}{(1-4x)^4}+
\frac{2x^3(6x-2)^2}{(1-4x)^5} +\frac{5x^4t(6x-2)}{(1-4x)^6}\\
+\frac{5x^4(6x-2)^3+3x^5t^2}{(1-4x)^7}+\frac{21x^5 (6 x - 2)^2 t}{(1 - 4 x)^8} + \frac{28x^6 (-2 + 6 x)t^2 + 14 x^5 (-2 + 6 x)^4}{(1 - 4 x)^9} +\cdots,
\end{multline*}
where $t=(-3x^2+5x-1)/(1-x)$.

\section{Some results for the case $d=-1$}
In this section we keep analyzing the bivariate generating function given in previous section for the particular case $d=-1$. For this case, 
we provide more detailed results. We denote by $\Q$ the set of all nonempty paths in
$\D_{-1}$ having at least one valley, where the last valley is at ground level. We denote by $\Q_n$  the subset of $\Q$ formed by all paths of
semi-length $n$ and denote by $q_n$ the cardinality of $\Q_n$.  For simplicity, when $d=-1$ (or $e=1$) we use  $L(x,y)$ instead of $L_{1}(x,y)$. 

\begin{theorem}\label{teodnegativo}
 The bivariate generating function $L(x,y)$ is given by
  $$L(x,y)= \frac{(x-1)y\left(1-x(2+y) - \sqrt{(1-x-2xy-2x^2y+x^2y^2-x^3y^2)/(1-x)} \right)}{2(1-2x+x^2-2xy+x^2y)}.$$
\end{theorem}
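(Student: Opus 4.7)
The plan is to specialize Theorem \ref{mainGF} to $e=1$ and eliminate the auxiliary series $S=S_1(x,y)$ to obtain a single polynomial equation for $L$. Writing the first relation of Theorem \ref{mainGF} as $L(1-x-xS)=xy$, I would solve for $S$ as
\[
S=\frac{(1-x)L-xy}{xL},\qquad 1-xS=\frac{x(L+y)}{L},
\]
so that both $S$ and $1-xS$ can be substituted into the algebraic equation for $S_e$ as rational functions of $L$.

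I would then plug these into the second relation of Theorem \ref{mainGF} at $e=1$, namely
\[
(1-xS)(y+(1-y)xS)-S(1-xS)^2-\frac{x^3y}{1-x}S=0,
\]
and clear denominators by multiplying through by $xL^3(1-x)$. The outcome is nominally a cubic in $L$, but on expansion I expect the coefficient of $L^3$ to vanish as the identity $(1-x)\bigl[(1-x+xy)-(1-x)-xy\bigr]=0$, and a common factor of $y$ to appear, leaving a quadratic $aL^2+bL+c=0$ with
\[
a=1-2x+x^2-2xy+x^2y,\quad b=y(1-x)(1-2x-xy),\quad c=-xy^2(1-x).
\]

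Finally I would apply the quadratic formula. A short computation simplifies the discriminant to
\[
b^2-4ac=y^2(1-x)\bigl(1-x-2xy-2x^2y+x^2y^2-x^3y^2\bigr),
\]
so $\sqrt{b^2-4ac}=(1-x)\,y\sqrt{(1-x-2xy-2x^2y+x^2y^2-x^3y^2)/(1-x)}$. The initial condition $L(x,y)=xy+O(x^2)$ forces the choice $L=(-b+\sqrt{b^2-4ac})/(2a)$, and after factoring $(x-1)$ out of the numerator the expression coincides with the formula in the statement.

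The main obstacle is the bookkeeping in the middle step: verifying that the cubic collapses to a quadratic and correctly tracking signs and like terms among the many monomials in $x$ and $y$. This is routine algebra, easily checked with a computer algebra system, and the conceptual point is simply that Theorem \ref{mainGF} at $e=1$ is small enough to eliminate $S$ by hand and yield a closed form with a single square root.
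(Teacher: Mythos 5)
Your derivation is correct, and I have checked the elimination in detail: writing $S=\frac{(1-x)L-xy}{xL}$ and $1-xS=\frac{x(L+y)}{L}$, substituting into the $e=1$ equation of Theorem \ref{mainGF} and clearing denominators, the coefficient of $L^3$ cancels exactly as $(1-x)\bigl[(1-x+xy)-(1-x)-xy\bigr]=0$, an overall factor of $y$ drops out, and one is left with $aL^2+bL+c=0$ for precisely the $a$, $b$, $c$ you list, with $b^2-4ac=y^2(1-x)\bigl(1-x-2xy-2x^2y+x^2y^2-x^3y^2\bigr)$; the root with $L(x,y)=xy+O(x^2)$ is the stated formula (and at $y=1$ it reproduces \eqref{LxForD1}). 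However, your route is genuinely different from the paper's. The paper does not specialize Theorem \ref{mainGF}; it gives a fresh first-return decomposition adapted to $d=-1$ (a path is $UD$, $UTD$, $U\Delta DT$, or $UQDT$, with $Q$ ranging over the set $\Q$ of paths whose last valley is at ground level), which yields the linear relation \eqref{teodnegativoLxy} for $L$ together with a separate quadratic functional equation for the auxiliary series $Q(x,y)$; solving that quadratic gives the closed form \eqref{gFunQ}, and back-substitution gives $L(x,y)$. In the notation of Theorem \ref{mainGF} with $e=1$ one has $S_1(x,y)=\frac{y}{1-x}+Q(x,y)$, so the two linear relations coincide; the difference is only in how the auxiliary series is pinned down. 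Your approach buys economy: the general theorem is already proved, and for $e=1$ its algebraic equation is small enough to eliminate $S$ by hand, so no new combinatorial decomposition is needed. The paper's approach buys the explicit expression \eqref{gFunQ} for $Q(x,y)$, which is not a throwaway byproduct: it is used repeatedly later (for the series $b(n)$, for Corollary \ref{NumberPathQn}, and throughout the area computations of Section 4), so the paper needs that decomposition in any case.
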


\begin{proof}
A path $P \in \Q$ can be uniquely decomposed as either
$UD$, $\, UTD$, $\, U\Delta DT,$ or $\, UQDT$, where $\Delta$ is a pyramid, $T \in  \D_{-1}$, and $Q\in \Q$. Therefore,  we obtain the following functional relation
\begin{equation}\label{teodnegativoLxy}
L(x,y)= xy+ xL(x,y) + x\left(\frac{y}{1-x}\right)L(x,y) + xL(x,y)Q(x,y),
\end{equation}
where $$Q(x,y):=\sum_{Q\in \Q}x^{\ell(Q)}y^{\rho(P)}.$$
We are going to obtain an explicit expression for the generating function $Q(x,y)$. Additionally, a path $Q\in\Q$ can be uniquely decomposed  as either
$U\Delta DU\Delta'D$, $\, U\Delta DR$, $\,UR_1 DR_2$, or $\,URDU\Delta D$,
where $\Delta, \Delta'$ are pyramids, and $R, R_1, R_2\in \Q$ (see Figure \ref{Fig4} for a graphical representation of this decomposition).

\begin{figure} [htbp]
\begin{center}
\includegraphics[scale=0.5]{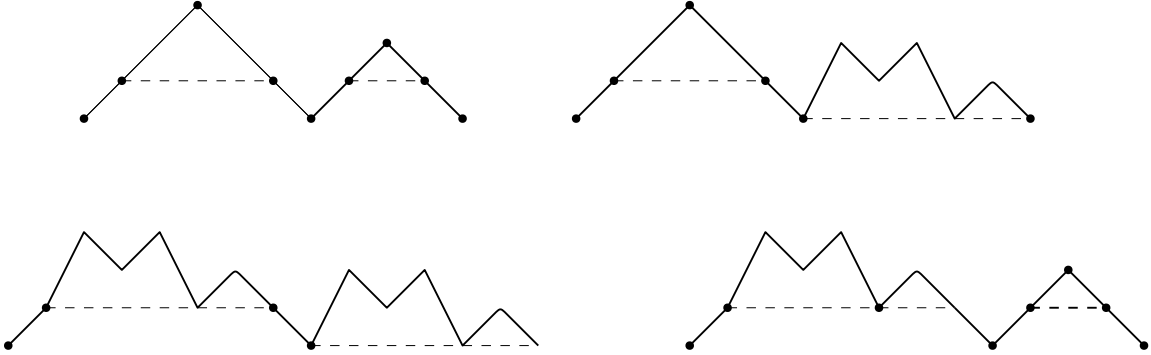}
\end{center}
\caption{Decomposition of a $(-1)$-Dyck path in $\Q$.} \label{Fig4}
\end{figure}

Using the symbolic method, we obtain the functional equation
\begin{align*}
  Q(x, y)&= x^2\left(\frac{y}{1-x}\right)^2+ x\left(\frac{y}{1-x}\right)Q(x, y) + x(Q(x,y))^2 + x^2\left(\frac{y}{1-x}\right)Q(x,y).
\end{align*}
Solving the equation above for $Q(x,y)$, we find that
\begin{align}\label{gFunQ}
Q(x, y)=\frac{1-x-xy-x^2y -\sqrt{(1-x)(1 - x - 2 x y -2 x^2 y + x^2y^2 - x^3y^2)}}{2(1-x)x}.
\end{align}
This and solving \eqref{teodnegativoLxy} for $L(x,y)$ imply the desired result.
\end{proof}

Expressing $L(x,y)$ as a series expansion we obtain these first few terms.
\begin{multline*}
L(x,y)=x y+x^2 \left(y^2+y\right)+x^3 \left(y^3+3 y^2+y\right)+x^4 \left(y^4+\textbf{6}y^3+6 y^2+y\right)\\
+x^5 \left(y^5+10 y^4+19 y^3+10 y^2+y\right)+x^6\left(y^6+15 y^5+46 y^4+45 y^3+15 y^2+y\right)+\cdots
\end{multline*}

Figure \ref{Eje1} depicts all six paths in $\D_{-1}(4)$ with exactly 3 peaks. Notice that this is the bold coefficient of $x^4y^3$ in the above series.

\begin{figure} [htbp]
\begin{center}
\includegraphics[scale=0.8]{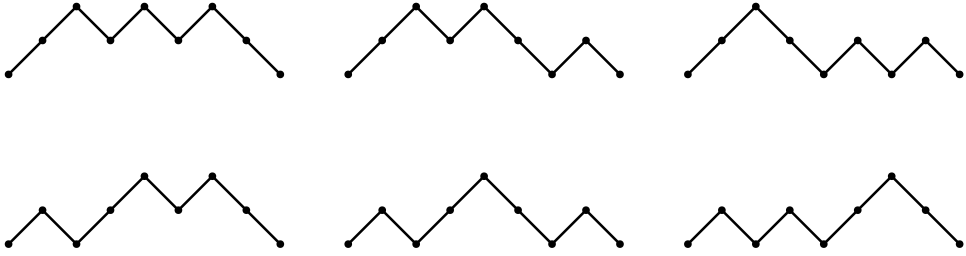}
\end{center}
\caption{All six paths in $\D_{-1}(4)$ with exactly 3 peaks.} \label{Eje1}
\end{figure}

The generating function for the $(-1)$-Dyck paths is given by
\begin{equation}\label{LxForD1}
L(x):=L(x,1)=\frac{-1 + 4 x - 3 x^2 + \sqrt{1 - 4 x + 2 x^2 + x^4}}{2 (1 - 4 x + 2 x^2)}.
\end{equation}
Thus,
\begin{equation*}
L(x):=x+2 x^2+5 x^3+14 x^4+41 x^5+123 x^6+375 x^7+1157 x^8+\cdots.
\end{equation*}

For simplicity for the remaining part of the paper, if there is not ambiguity, we use $r(n)$ instead of $r_{-1}(n)$. Our interest here is to give a combinatorial expression for this sequence.
First of all, we give some preliminary results. Let $b(n)$ be the number of $(-1)$-Dyck paths of semi-length  $n$ that either have no valleys or the last valley is at ground level. Note that $b(n)-1$ is the $n$-th coefficient of the generating function $Q(x,1)$, see  \eqref{gFunQ}, or equivalently 
\begin{align*}
\sum_{n\geq 0}b(n)x^n&=Q(x,1) + \frac{1}{1-x}=\frac{1-x^2-\sqrt{1-4x+2x^2+x^4}}{2(1-x)x}\\
&=1+x+2 x^2+4 x^3+9 x^4+22 x^5+57 x^6+154 x^7+429 x^8+\cdots.
\end{align*}

This generating function coincides with the generating function of the number of Dyck paths of semi-length $n$  that avoid the subpath $UUDU$. From Proposition 5 of \cite{Sap} and \cite[pp. 10]{barry} we conclude the following proposition.

\begin{proposition}
For all $n\geq 0$ we have
$$b(n)=1+\sum_{j=0}^{\lfloor\frac{n-1}{2}\rfloor}\frac{(-1)^j}{n-j}\binom{n-j}{j}\binom{2n-3j}{n-j+1}=\sum_{k=0}^{\lfloor\frac{n}{2}\rfloor}\sum_{j=0}^{n-k}\binom{n-k}{j}N(j,k),$$
where $N(n,k)=\frac{1}{n}\binom nk \binom{n}{k-1}$ are the Narayana numbers, with $N(0,0)=1$. 
\end{proposition}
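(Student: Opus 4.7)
The plan is to reduce the proposition to known enumerative results by identifying $b(n)$ with the number of Dyck paths of semi-length $n$ that avoid the subword $UUDU$, and then invoke the closed-form expressions already in the literature for that sequence.

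First, I would compare the closed-form generating function already derived just above the proposition, namely
\[
\sum_{n\geq 0}b(n)x^n=\frac{1-x^2-\sqrt{1-4x+2x^2+x^4}}{2(1-x)x},
\]
with the generating function for Dyck paths avoiding $UUDU$. The latter is computed in \cite{Sap} via a first-return decomposition with the constraint that after an initial $UU$ the next descent cannot be followed by a $U$, and it can also be read off from the tables in \cite{barry}. The two rational-plus-radical expressions agree term by term, which establishes that $b(n)$ equals the number of Dyck paths of semi-length $n$ avoiding $UUDU$.

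Once that identification is in place, the first equality in the proposition is exactly the content of Proposition 5 of \cite{Sap}, which gives the alternating sum
\[
1+\sum_{j=0}^{\lfloor(n-1)/2\rfloor}\frac{(-1)^j}{n-j}\binom{n-j}{j}\binom{2n-3j}{n-j+1}
\]
as the enumerator of $UUDU$-avoiding Dyck paths. The second equality follows from \cite[p.\ 10]{barry}, which expresses the same count as the double sum $\sum_{k}\sum_{j}\binom{n-k}{j}N(j,k)$ by grouping a $UUDU$-avoiding Dyck path according to the number of peaks $k$ of its ``Narayana core'' and the positions where trivial $UD$ peaks have been inserted.

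The main obstacle is the first step, verifying that $b(n)$ really coincides with the count of $UUDU$-avoiding Dyck paths. The cleanest route is the analytic one sketched above, since both generating functions are given in closed form; a combinatorial argument would instead observe that a $(-1)$-Dyck path whose last valley sits on the $x$-axis cannot contain a $UUDU$ factor, because such a factor forces a valley immediately to the right of a strictly higher one, creating a valley-to-valley drop larger than permitted by the $(-1)$-Dyck condition when coupled with the hypothesis that the final valley lies at ground level. Formalizing that bijection (or exclusion) precisely is the delicate point; once it is in hand, the two formulas transfer from the cited sources with no further work.
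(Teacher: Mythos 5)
Your main route---matching the generating function $\frac{1-x^2-\sqrt{1-4x+2x^2+x^4}}{2(1-x)x}$ against the known generating function for $UUDU$-avoiding Dyck paths and then importing the two closed forms from Proposition 5 of \cite{Sap} and from \cite[p.~10]{barry}---is exactly the paper's argument, and it is correct. One caveat on your suggested combinatorial alternative: the exclusion you describe is false, since a path counted by $b(n)$ can contain $UUDU$ (for example $UUDUDDUD$ has valley vector $(1,0)$ and its last valley at ground level), so the two families are merely equinumerous rather than equal, and only the analytic identification goes through without constructing a genuine bijection.
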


\begin{remark}
\label{rem1}
Let $\mathcal{B}(n)=\mathcal{Q}_n\cup \{U^nD^n\}$ denote the set of $(-1)$-Dyck paths having either no valleys or the last valley is at height zero. Denote by $\mathcal{B}_{j,k}(n)$ 
the subset of $\mathcal{B}(n)$ of paths that contain exactly $j$ valleys where $k$ of those valleys are $(-1)$-valleys, i.e., there is a valley to the left of it, no valleys in between, and the  
heights differ by $-1$. Take a path $P\in \mathcal{B}_{j,k}(n)$, then $\rho (P)=j+1$. Decompose the path as $$P=U^a\Delta _{t_1}D^{r_1}U^{s_1}\Delta _{t_2}\cdots D^{s_j}U^bD^b,$$ 
where there are $k$ occurrences of $D\Delta _p{\color{red}{D}}U$. So, there are $k$ {\textcolor{red}{red}} down steps indicating a $(-1)$-valley. Notice then that there are $n-k$ down  
steps that are not labeled red and they belong to pyramids, there is no restriction on those, so they can be represented as compositions of $n-k$ down steps on $j+1$ parts, one per  
peak. They are counted by $\binom{n-k-1}{j+1-1}=\binom{n-k-1}{j}$. This means that, numerically, $N(j,k+1)$ corresponds to the sequence of $(-1)$-Dyck paths of semilength $j+k+1$ 
containing $j$ valleys with $k$ of them being $(-1)$-valleys.
\end{remark}

\begin{theorem} The total number of paths in $\D_{-1}(n)$ is given by
$$r(n)=\sum_{\ell=0}^n\sum_{i=0}^{n-\ell-1}\binom{n-\ell-1}{i}q^{(i)}(\ell),$$
where
$$q^{(i)}(n)=\sum_{n_1+n_2+\cdots+n_i=n}b(n_1)b(n_2)\cdots b(n_i).$$
\end{theorem}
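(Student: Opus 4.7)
The plan is to derive the formula by extracting a power-series coefficient from a closed form for $L(x) := L(x,1)$. Specializing the functional equation of Theorem \ref{mainGF} to $d = -1$, hence $e = 1$, and setting $y = 1$, one obtains
\[
L(x) = x + x L(x) + x\, S_1(x,1)\, L(x).
\]
I would then identify $S_1(x,1)$ with $B(x) := \sum_{n\ge 0} b(n) x^n$. Indeed, the definition $S_1(x,y) = y/(1-x) + Q_0^{(1)}(x,y)$ together with the fact that $b(n)$ enumerates the pyramid $U^nD^n$ (contributing $1/(1-x)$) plus the paths in $\mathcal{Q}$ (contributing $Q(x,1) = Q_0^{(1)}(x,1)$), and the convention $b(0)=1$, gives $S_1(x,1) = B(x)$. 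Solving for $L$ yields the compact expression
\[
L(x) = \frac{x}{1 - x\bigl(1 + B(x)\bigr)}.
\]

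The next step is to expand $L(x)$ as a geometric series and then apply the binomial theorem:
\[
L(x) = \sum_{k\ge 0} x^{k+1}\bigl(1 + B(x)\bigr)^k = \sum_{k\ge 0} x^{k+1}\sum_{i=0}^{k}\binom{k}{i}B(x)^i.
\]
By the very definition of $q^{(i)}(n)$ as the $i$-fold convolution of the sequence $(b(n))_{n\ge 0}$, one has $B(x)^i = \sum_{n\ge 0} q^{(i)}(n)\, x^n$, with the conventions $q^{(0)}(n) = \delta_{n,0}$ and $b(0)=1$. Extracting the coefficient of $x^n$ and letting $\ell := n - k - 1$ (so $k \ge 0$ corresponds to $\ell \le n-1$) rearranges the double sum into
\[
r(n) = \sum_{\ell=0}^{n-1}\sum_{i=0}^{n-\ell-1}\binom{n-\ell-1}{i}\, q^{(i)}(\ell).
\]
Since the term $\ell = n$ gives an empty inner sum, this is exactly the formula as stated (with outer sum extended to $\ell = n$).

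The main delicate step is the identification $S_1(x,1) = B(x)$: one has to reconcile the conventions introduced in Section~2 (where $S_e$ is defined in the course of proving Theorem \ref{mainGF}) with those of Section~3 (where $b(n)$ and $\mathcal{B}$ are defined), taking care that the empty path contributes $b(0)=1$ and that pyramids and paths with last valley at ground level are correctly consolidated. Once this bookkeeping is clean, the remaining work --- a geometric expansion followed by a binomial expansion --- is entirely routine.
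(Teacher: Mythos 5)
Your argument is correct, but it is a genuinely different proof from the one in the paper. The paper proves this identity bijectively: it builds an explicit map $\varphi$ from pairs consisting of an $i$-tuple of paths in $\mathcal{B}$ of total semi-length $\ell$ together with a composition of $n-\ell$ into $i+1$ positive parts (counted by $\binom{n-\ell-1}{i}$) onto $\D_{-1}(n)$, and describes the inverse by an algorithm that repeatedly locates the rightmost $(-1)$-valley. You instead work entirely with generating functions: specializing the functional equation of Theorem \ref{mainGF} to $e=1$, $y=1$, identifying $S_1(x,1)$ with $B(x)=\sum_{n\ge0}b(n)x^n$ (which the paper itself records as $\sum_{n\ge 0}b(n)x^n=Q(x,1)+\tfrac{1}{1-x}$, so the bookkeeping you flag as delicate is in fact already settled in the text), and then expanding $L(x)=x/\bigl(1-x(1+B(x))\bigr)$ geometrically and binomially. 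I checked that this closed form rationalizes to the expression in \eqref{LxForD1} and that coefficient extraction with $\ell=n-k-1$ reproduces the stated double sum (the $\ell=n$ term being vacuous), and the formula verifies numerically through $n=5$. Your route is shorter and essentially mechanical, at the cost of leaning on Theorem \ref{mainGF} and of giving no combinatorial insight into which tuple-plus-composition encodes which path; the paper's bijection is longer and fussier to state but explains the factor $\binom{n-\ell-1}{i}$ structurally (your $\binom{k}{i}$ choosing which of the $k$ slots carry a $\mathcal{B}$-component is the algebraic shadow of their composition into $i+1$ positive parts). Both proofs are valid; yours would serve as an independent check on theirs.
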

\begin{proof}
Let $\mathcal{Q}^{(i)}(n)$ denote the set of $i-$tuples $(P_1,\ldots ,P_i)$ of paths $P_j\in \mathcal{B} =\bigcup _{n\geq 0}\mathcal{B}(n)$, using the notation in Remark \ref{rem1} above, such that $\ell \left (P_1\right )+\cdots+\ell \left (P_i\right )=n$. It is clear that $\left |\mathcal{Q}^{(i)}(n)\right |=q^{(i)}(n)$. Notice that by definition of $\mathcal{B}$ we allow the empty path, $\lambda$, to be counted. Recall, also, that the number of compositions of $n-\ell$ on $i+1$ positive integer parts, denoted as $\mathcal{C}_{i+1}(n-\ell)$, is given by the binomial coefficient  $\binom{(n-\ell) -1}{(i+1)-1}$.

Consider, then, the function $$\varphi :\bigcup _{i,\ell}\left (\mathcal{Q}^{(i)}(\ell)\times \mathbb{C}_{i+1}(n-\ell)\right )\longrightarrow \mathbb{D}_{-1}(n),$$
defined by $\varphi \left ((P_1,\ldots ,P_i),(C_1,\ldots ,C_{i+1})\right )=P$, where $P$ is the path described as
$P=U^{C_1}MU^{C_2}\ldots$,
where $$M=\begin{cases}
D^{C_1},& \text{ if } P_1=\lambda; \\
P_1,& \text{ if } P_1=\Delta;\\
P_1D,& \text{ otherwise}.
\end{cases}$$
Figure \ref{fig:exBij} shows two examples of how the function $\varphi$ works. 

\begin{figure}[H]
    \centering
    \includegraphics[height=5cm,width=10cm]{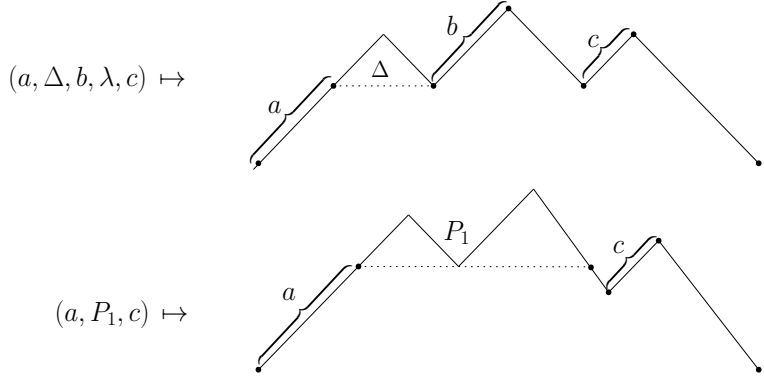}
    \caption{Function $\varphi$ applied to $(a,b,c),(\Delta , \lambda)$ and to $(a,c),(P_1)$.}
    \label{fig:exBij}
\end{figure}
\smallskip

This function is a bijection, where the inverse function is given by decomposing a path by using the following algorithm:
\begin{algorithm}[htbp]
\begin{enumerate}[(1) ]
    \item  If there are $(-1)$-valleys, go to step (2). If there are no consecutive valleys with difference equal to $-1$, then the path is increasing and can be decomposed by using only pyramids $\Delta$ and $\lambda$ in the following way:
    \begin{itemize}
    \item If there are no valleys, then the path is $\Delta _a = U^aD^a$ for some $a\geq 1$, return $(a)$. 
        \item If there is just one valley, the path is $U^aD^bU^cD^d$ for $a\geq b$ and $a+c=b+d$, if $a>b$ return $(a-b,U^bD^b,c)$ and if $a=b$ return $(a,\lambda,c)$.
        \item For two consecutive valleys at the same height (not in the ground), place $(a,\Delta ,b,\lambda)$ and start at the second valley. If they are in the ground, return $(a,\lambda ,b,\lambda)$.
        \item For two consecutive valleys that are not at the same height return $(a,\Delta  _1,b, \Delta _2)$
    \end{itemize}
   
    \item Find the rightmost $(-1)$-valley, i.e., locate the rightmost occurrence of $DU^kD^{k}{\color{red}{D}}U.$ The right part of this string is increasing: go to step (1). Extract the maximal subword that is a Dyck path and call this path $P_i,$ increase the value of $i$ and go to step (1) with the left part of the path.
\end{enumerate}
\caption{Inverse function $\phi$ (or reverse)}
\label{ReverseFunction}
\end{algorithm}

For example, consider the path given in Figure \ref{fig:expro}. First one locates the rightmost $(-1)$-valley (all denoted by a red circle around them) and to the right then the first step says that it is $(1)$. We take out the path $P_1$ and we locate the next $(-1)$-valley, the right part corresponds to $(1,\lambda,1,\lambda,1)$ and the left part of $P_2$ corresponds to $(1,UD,1)$, and so the whole path is encoded by $(1,UD,1,P_2,1,\lambda,1,\lambda,1,P_1,1)$.
\end{proof}

\begin{figure}[H]
    \centering
    \includegraphics[scale=0.8]{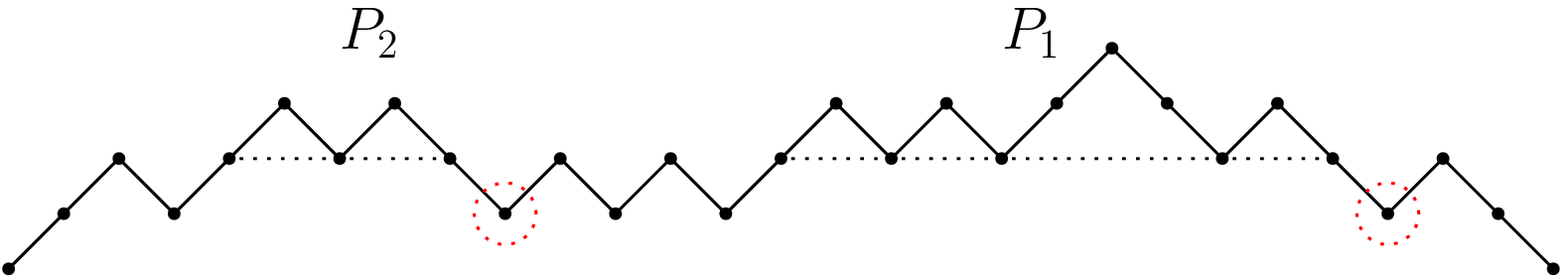}
    \caption{Example inverse function.}
    \label{fig:expro}
\end{figure}

The Corollary \ref{NumberPathQn} is direct consequence of the decomposition given in the proof of Theorem \ref{teodnegativo}. The first result. follows
from Figure \ref{Fig4} and the second result uses the first part and the decomposition $UTD$, $U\Delta DT,$ or $UQDT$ as given in the proof
of Theorem \ref{teodnegativo}.

\begin{corollary}\label{NumberPathQn} If $n>1$, then these hold
\begin{enumerate}
\item If $q_n=|\Q_n|$, then 
 $$q_n=2 q_{n-1}+q_{n-2}+q_{n-3}+\sum _{i=2}^{n-4} q_{i} (q_{n-i-1}-q_{n-i-2})+1,$$
for $n>3$, with the initial values $q_1=0$, $q_2=1$, and $q_3=3$.

\item If $r(n)=|\D_{d}(n)|$, then 
$$r(n)=3 r(n-1)-r(n-2)+q_{n-2}+\sum _{i=2}^{n-3} q_{i} (r(n-i-1)-r(n-i-2)), $$
for $n>3$, with the initial values $r(1)=1$, $r(2)=2$, and $r(3)=5$.
\end{enumerate}
\end{corollary}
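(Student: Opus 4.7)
The plan is to read off both recurrences directly from the functional identities for $Q(x,1)$ and $L(x,1)$ already set up inside the proof of Theorem \ref{teodnegativo}. Specializing at $y=1$ and writing $L(x):=L(x,1)$, $Q(x):=Q(x,1)$, those identities become
\begin{align*}
L(x) &= x + xL(x) + \frac{x}{1-x}L(x) + xL(x)Q(x),\\
Q(x) &= \frac{x^2}{(1-x)^2} + \frac{x}{1-x}Q(x) + xQ(x)^2 + \frac{x^2}{1-x}Q(x).
\end{align*}
In each case the strategy is to multiply by $(1-x)$ to clear the simple pole, extract the coefficient of $x^n$, and split the resulting convolution difference $[x^{n-1}]F-[x^{n-2}]F$ (with $F=Q^2$ for part (1) and $F=LQ$ for part (2)) into a bulk range matching the sum in the statement plus a boundary index that contributes the isolated constants $q_{n-3}$, $q_{n-2}$, or $+1$.

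For part (1), the $Q$-equation becomes $(1-x)Q = x^2/(1-x) + xQ + x(1-x)Q^2 + x^2 Q$. Reading $[x^n]$ for $n\ge 2$ gives
\[
q_n - q_{n-1} = 1 + q_{n-1} + q_{n-2} + \bigl([x^{n-1}]Q^2 - [x^{n-2}]Q^2\bigr).
\]
Writing the last difference as $\sum_{i\ge 0} q_i(q_{n-1-i}-q_{n-2-i})$ and using $q_0=q_1=0$, only $2\le i\le n-3$ contribute; the extreme index $i=n-3$ yields $q_{n-3}q_2=q_{n-3}$ (since $q_2=1$ and $q_1=0$), after which the range $2\le i\le n-4$ remains. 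Rearranging produces the first recurrence.

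For part (2), the analogous manipulation on the $L$-equation gives $(1-x)L = x(1-x) + x(1-x)L + xL + x(1-x)LQ$; for $n\ge 3$ the polynomial $x(1-x)$ drops out, leaving
\[
r(n) = 3r(n-1) - r(n-2) + [x^{n-1}]LQ - [x^{n-2}]LQ.
\]
Writing the difference as $\sum_{j\ge 0} q_j\bigl(r(n-1-j)-r(n-2-j)\bigr)$, the index $j=n-2$ contributes $q_{n-2}(r(1)-r(0))=q_{n-2}$, while $q_0=q_1=0$ and $r(0)=0$ restrict the remaining sum to $2\le j\le n-3$, matching the statement verbatim.

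The one real obstacle is the boundary-index bookkeeping for the convolutions: one must check that the isolated terms $q_{n-3}$, $q_{n-2}$, and the constant $+1$ come precisely from the extreme indices of the convolution sums, with no spurious contribution from lower indices of the form $q_0,q_1$ or from $r(0)$. The initial values $q_1=0$, $q_2=1$, $q_3=3$ and $r(1)=1$, $r(2)=2$, $r(3)=5$ are then verified directly by inspection of the small cases (and for $r(n)$ also agree with $C_n$ on the range $1\le n\le|d|+3=4$).
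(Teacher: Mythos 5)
Your proposal is correct and follows essentially the same route as the paper: the paper states that the corollary is a direct consequence of the decompositions (equivalently, the functional equations for $Q(x,y)$ and $L(x,y)$) from the proof of Theorem \ref{teodnegativo}, and your argument simply carries out the coefficient extraction at $y=1$ that the paper leaves implicit. Your boundary bookkeeping (the $i=n-3$ term giving $q_{n-3}$, the $j=n-2$ term giving $q_{n-2}$, and the $+1$ from $x^2/(1-x)$) checks out against $q_0=q_1=0$, $r(0)=0$, and the stated initial values.
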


The generating function of the sequence $r(n)$ is algebraic of order two, then $r(n)$ satisfies  a recurrence relation with polynomial coefficients.  
This can be automatically solved with Kauers's algorithm \cite{Kau}. In particular we obtain that $r(n)$ satisfies the recurrence relation:

 \begin{multline*}
 2nr(n) - 4nr(n+1) + (12 + 5 n)r(n+2) - 4(15 + 4 n)r(n+3) \\
 + 10(9 + 2 n)r(n+4) - 2(21 + 4 n)r(n+5) + (6 + n)r(n+6)=0, \quad n \geq 6
 \end{multline*}
with the initial values $r(0)=0,  r(1)=1, r(2)=2, r(3)=5, r(4)=14$, and $r(5)=41$.

In Theorem \ref{AsymptoticApproximationr(n)} we give an  asymptotic approximation for the sequence $r(n)$. To accomplish this goal we use the singularity  
analysis method to find the asymptotes of the coefficients of a generating function (see, for example, \cite{flajolet} for the details).

\begin{theorem} \label{AsymptoticApproximationr(n)}
The number of $(-1)$-Dyck paths  has the asymptotic approximation
$$r(n)\sim \frac{\rho^{-n}}{\sqrt{n^3\pi}}\cdot \dfrac{\sqrt{\rho(4-4\rho-4\rho^3)}}{4(-1+4\rho-2\rho^2)}.$$
\end{theorem}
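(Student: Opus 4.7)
The plan is a textbook application of Flajolet--Sedgewick singularity analysis to the explicit generating function $L(x)$ given in \eqref{LxForD1}. The $n^{-3/2}$ factor together with the factor $\sqrt{\rho(4-4\rho-4\rho^3)}$ in the statement already signals that the dominant singularity is a square-root branch point coming from the radicand $Q(x):=1-4x+2x^2+x^4$, and that $\rho$ is the smallest positive root of $Q$. So the entire task is to justify this choice of singularity and then extract the standard transfer.

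First I would identify the candidate singularities of
\[
L(x)=\frac{-1+4x-3x^2+\sqrt{Q(x)}}{2(1-4x+2x^2)}.
\]
They are (a) the zeros of $1-4x+2x^2$, namely $x_{\pm}=1\pm\tfrac{\sqrt{2}}{2}$, and (b) the zeros of $Q(x)$. Let $\rho$ be the smallest positive real root of $Q$; numerically $\rho\approx 0.2956$. The crucial observation is that the candidate pole $x_{-}=1-\tfrac{\sqrt 2}{2}\approx 0.2929$ is actually removable: at $x_{-}$ one has $1-4x_{-}+2x_{-}^{2}=0$, which forces $Q(x_{-})=x_{-}^{4}$ and hence $\sqrt{Q(x_{-})}=x_{-}^{2}$ (with the principal branch); substituting gives $-1+4x_{-}-3x_{-}^{2}+x_{-}^{2}=-(1-4x_{-}+2x_{-}^{2})=0$, so the numerator vanishes as well. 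A one-line L'H\^opital argument shows the limit at $x_{-}$ is finite, so $x_{-}$ is not a singularity of $L$; the same check eliminates $x_{+}$. Next I would confirm $\rho$ is the unique dominant singularity: $Q$ is a real polynomial of degree $4$ whose other roots (numerically $\approx 3.69$ and a complex pair of modulus $>1$) all lie strictly outside the disk of radius $\rho$, so by standard results $L$ admits a $\Delta$-analytic continuation and the transfer theorem applies.

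Next I would compute the singular expansion of $L$ at $x=\rho$. Writing $Q'(\rho)=-4+4\rho+4\rho^{3}=-(4-4\rho-4\rho^{3})$, a first-order Taylor expansion gives
\[
Q(x)=(4-4\rho-4\rho^{3})(\rho-x)+O\!\bigl((\rho-x)^{2}\bigr),
\]
so $\sqrt{Q(x)}=\sqrt{4-4\rho-4\rho^{3}}\,\sqrt{\rho-x}+O\!\bigl((\rho-x)^{3/2}\bigr)$. Since $D(\rho):=1-4\rho+2\rho^{2}=-\rho^{4}\ne 0$, the rational part $(-1+4x-3x^{2})/(2D(x))$ is analytic at $\rho$. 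Collecting yields
\[
L(x)=L(\rho)+\frac{\sqrt{4-4\rho-4\rho^{3}}}{2D(\rho)}\sqrt{\rho-x}+O(\rho-x).
\]

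Finally I would apply the standard transfer $[x^{n}]\sqrt{\rho-x}\sim -\dfrac{\sqrt{\rho}\,\rho^{-n}}{2\sqrt{\pi\,n^{3}}}$ (which is just $\sqrt{\rho-x}=\sqrt{\rho}\,(1-x/\rho)^{1/2}$ together with $[z^{n}](1-z)^{1/2}\sim -\tfrac{1}{2\sqrt\pi}n^{-3/2}$). This gives
\[
r(n)=[x^{n}]L(x)\sim \frac{-\sqrt{\rho(4-4\rho-4\rho^{3})}}{4D(\rho)\sqrt{\pi n^{3}}}\,\rho^{-n}=\frac{\rho^{-n}}{\sqrt{\pi n^{3}}}\cdot\frac{\sqrt{\rho(4-4\rho-4\rho^{3})}}{4(-1+4\rho-2\rho^{2})},
\]
using $-D(\rho)=-1+4\rho-2\rho^{2}$, which is exactly the stated estimate. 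The one step requiring genuine care (and the main obstacle) is the removability check at $x_{\pm}$: without it one might wrongly declare $x_{-}\approx 0.2929<\rho$ to be the dominant singularity and obtain a geometric growth rate $(1-\sqrt2/2)^{-n}$ instead of $\rho^{-n}$. Everything else is a mechanical computation against the Flajolet--Sedgewick transfer machinery.
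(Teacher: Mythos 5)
Your proposal is correct and follows essentially the same route as the paper: singularity analysis applied to the closed form \eqref{LxForD1}, with the square-root branch point at the smallest positive root $\rho$ of $1-4x+2x^2+x^4$ driving the $n^{-3/2}\rho^{-n}$ asymptotics. The one point where you go beyond the paper is the explicit verification that the zero $x_-=1-\tfrac{\sqrt2}{2}\approx 0.2929$ of the denominator $1-4x+2x^2$ is a removable singularity; since $x_-<\rho\approx 0.2956$, this check is genuinely necessary for the claim that $\rho$ is dominant, and the paper's proof asserts it without justification.
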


\begin{proof}
The dominant singularity $\rho$ of the generating function $L(x)$ is the smallest real positive root of $1 - 4 x + 2 x^2 + x^4$. From a symbolic computation we find that
$$\rho=\frac{1}{3} \left(-1-\frac{4\ 2^{2/3}}{\sqrt[3]{13+3 \sqrt{33}}}+\sqrt[3]{2 \left(13+3
   \sqrt{33}\right)}\right)\approx 0.295598.$$

From the expression given in \eqref{LxForD1} for $L(x)$ we have
\[
L(x)=\frac{-1+4x-3x^2}{2 (1 - 4 x + 2 x^2)} + \frac{\sqrt{1 - 4 x + 2 x^2 + x^4}}{2 (1 - 4 x + 2 x^2)}
\sim (x-\rho)^{1/2} \frac{\sqrt{\rho(4-4\rho-4\rho^3)}}{2(1-4\rho+2\rho^2)} \quad \text{ as }  x\to \rho.
\]
Therefore,
\begin{equation*}
r(n)\sim \frac{n^{-1/2-1}}{\rho^n(-2\sqrt{\pi})} \frac{\sqrt{\rho(4-4\rho-4\rho^3)}}{2(1-4\rho+2\rho^2)}= \frac{\rho^{-n}}{\sqrt{n^3\pi}}
\frac{\sqrt{\rho(4-4\rho-4\rho^3)}}{4(-1+4\rho-2\rho^2)}. \qedhere
\end{equation*}
\end{proof}

\section{The Area of the  $(-1)$-Dyck paths}

In this section we use generating functions and recursive relations to analyze the distribution of the area of the paths in the set of restricted
$(-1)$-Dyck paths.  We recall that the \emph{area} of  a Dyck path is the sum of the  absolute values of  $y$-components of all points in the path.
We use $\area(P)$ to denote the area
of a path $P$.   From Figure \ref{Example} on Page \pageref{Example},  we can see that $\area(P)=70$.  We use  $a(n)$
to denote the total  area  of all paths in $\D_{-1}(n)$.  In Theorem \ref{GFAreaDDyckPath}  we give a generating function for the sequence $a(n)$.
We now introduce a bivariate  generating function depending on this previous parameter and  $\ell(P)$ (the semi-length of $P$). So,

 $$A(x, q):=\sum_{P\in \D_{-1}}x^{\ell(P)}q^{\area(P)}.$$

 We now give again some terminology needed for the following theorems. Let $\Q \subset \D_{-1}(n)$ be the set formed by all paths having at least one valley, were
the last valley is at ground level; let $\Q_n\subset \Q$ be the set formed by all paths of semi-length $n$, and  let $q_n=|\Q_n|$.

\begin{theorem}\label{GFAreaDDyckPath}
The generating function for the sequence $a(n)$ is given by
\begin{align*}
V(x)&=\sum_{n\geq 0} a(n)x^n=\frac{b(x) - c(x)\sqrt{1-4x+2x^2+x^4}}{(1- x)^2 (1 - 4 x + 2 x^2)^3 (1 - 3 x - x^2 - x^3)},
\end{align*}
where
\begin{align*}
b(x)&=2 x - 23 x^2 + 107 x^3 - 262 x^4 + 359 x^5 - 256 x^6 + 82 x^7 -
 5 x^8 - 10 x^9 + 6 x^{10},\\
c(x)&=x - 10 x^2 + 41 x^3 - 89 x^4 + 108 x^5 - 73 x^6 + 18 x^7 + 2 x^8.
 \end{align*}
\end{theorem}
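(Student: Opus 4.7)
The plan is to introduce the bivariate generating function
\[
A(x,q)=\sum_{P\in\D_{-1}}x^{\ell(P)}q^{\area(P)}
\]
together with its companion $Q_A(x,q)=\sum_{P\in\Q}x^{\ell(P)}q^{\area(P)}$, and to extract $V(x)=\partial_qA(x,q)|_{q=1}$ by implicit differentiation. The first step is the area-version of the first-return decomposition used in the proof of Theorem~\ref{teodnegativo}. The key observation is that wrapping $U\cdots D$ around a subpath $P_1$ shifts every point of $P_1$ up by one and appends two height-zero endpoints, so $\area(UP_1D)=\area(P_1)+2\ell(P_1)+1$; at the level of generating functions this translates into the substitution $F(x,q)\mapsto xq\,F(xq^2,q)$ on the subpath carried inside the wrap.

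Writing $\Pi(x,q):=\sum_{k\ge1}x^kq^{k^2}$ for the area-sensitive pyramid series, the decomposition of Theorem~\ref{teodnegativo} yields the functional equation
\[
A(x,q)=xq\bigl(1+A(xq^2,q)\bigr)+xq\bigl(1+\Pi(xq^2,q)+Q_A(xq^2,q)\bigr)A(x,q),
\]
which at $q=1$ recovers the $y=1$ specialization of the equation for $L(x,y)$ from Theorem~\ref{teodnegativo}. A second functional equation, involving only $A$, $Q_A$ and $\Pi$, follows in the same way from the decomposition of $\Q$ drawn in Figure~\ref{Fig4}.

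Differentiating both equations with respect to $q$ and setting $q=1$, and using $A(x,1)=L(x)$, $Q_A(x,1)=Q(x,1)$, $\Pi(x,1)=x/(1-x)$, $\partial_q\Pi(x,q)|_{q=1}=\sum_{k\ge1}k^2x^k=x(1+x)/(1-x)^3$, together with the $x$-derivatives of \eqref{LxForD1} and of \eqref{gFunQ} at $y=1$ for $L'(x)$ and $Q'(x)$, turns the problem into a $2\times2$ linear system in the unknowns $V(x)$ and $Q_V(x):=\partial_qQ_A(x,q)|_{q=1}$. Its coefficients are rational functions of $x,L,Q,L',Q'$, and solving the system expresses $V(x)$ as a rational combination of those five quantities.

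The main obstacle is the final algebraic simplification. Because $L(x)$ and $Q(x,1)$ both lie in the ring $\mathbb{Q}(x)\bigl[\sqrt{R(x)}\bigr]$ with $R(x)=1-4x+2x^2+x^4=(1-x)(1-3x-x^2-x^3)$, and since the derivative of $\sqrt{R}$ is a rational multiple of $\sqrt{R}$ itself, the solution $V(x)$ automatically falls into the claimed shape $(b(x)-c(x)\sqrt{R(x)})/D(x)$. The denominator $(1-x)^2(1-4x+2x^2)^3(1-3x-x^2-x^3)$ is forced by three sources: the factor $(1-x)^2$ comes from the pyramid contribution $\partial_q\Pi|_{q=1}=x(1+x)/(1-x)^3$ after one power of $1-x$ cancels against the rational part of $L$ and $Q$; the cube $(1-4x+2x^2)^3$ comes from powers of the denominator of $L$ generated when $L'$ enters and the square root is rationalized; and the irreducible factor $1-3x-x^2-x^3$ appears through the factorization of $R$ exposed when isolating the rational and irrational parts of $Q(x,1)$. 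Pinning down the exact numerator polynomials $b(x)$ and $c(x)$ of degrees $10$ and $8$ is then a lengthy but routine symbolic computation.
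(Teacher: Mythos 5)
Your proposal is correct and follows essentially the same route as the paper: the same bivariate area generating function with the $xq\,F(xq^2,q)$ wrapping rule, the same two functional equations coming from the first-return decomposition and from the decomposition of $\Q$ in Figure~\ref{Fig4} (your $\Pi$ and $Q_A$ are the paper's $E$ and $B$), followed by differentiation at $q=1$ and solving the resulting (in fact triangular) linear system for $V(x)$. The concluding symbolic simplification you defer is likewise left to computation in the paper.
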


\begin{proof}
From the decomposition $UD, \, UTD, \, U\Delta DT,$ or $UQDT$ given in the proof of Theorem \ref{teodnegativo} we obtain the functional equation
\begin{align}\label{eqA1}
A(x,q)=xq+xqA(xq^2,q)+ E(x,q)A(x,q) + xqB(xq^2,q)A(x,q),
\end{align}
where $E(x,q):=\sum_{j\geq 1}x^jq^{j^2}$ and $B(x,q):=\sum_{P\in \Q}x^{\ell(P)}q^{\area(P)}$.
Note that  $E(x,q)$ corresponds to the generating function that counts the total number of non-empty pyramids in the given decomposition.

From the decomposition given in Figure \ref{Fig4}, we obtain the functional equation
\begin{align}\label{eqB1}
B(x,q)=E(x,q)^2+E(x,q)B(x,q)+xqB(q^2x,q)B(x,q) + xqB(q^2x,q)E(x,q).
\end{align}
Let $M(x)$ be the generating function of the total area of the $(-1)$-Dyck paths in $\Q$. From the definition of $A(x,q)$ and $B(x,q)$ we have
 $$V(x)=\left.\frac{\partial A(x,q)}{\partial q}\right|_{q=1} \quad \text{and} \quad M(x)=\left.\frac{\partial B(x,q)}{\partial q}\right|_{q=1}.$$
Therefore, differentiating \eqref{eqB1} with respect to $q$ we obtain,
\begin{multline*}
M(x)=\frac{2x^2(1+x)}{(1-x)^4}+\frac{x(x+1)}{(1-x)^3}Q(x) + \frac{x}{1-x}M(x) + xQ(x)^2 + \\
		x\left(M(x) + 2x \frac{\partial Q(x)}{\partial x}\right)\left(Q(x) + \frac{x}{1-x}\right)
				     + xQ(x)\left(M(x) + \frac{x}{1-x}\right) + xQ(x)\frac{x(x+1)}{(1-x)^3},
\end{multline*}
 where $Q(x):=Q(x,1)$ and $Q(x,y)$ is the generating function given in \eqref{gFunQ} on Page \pageref{gFunQ}.

Now, differentiating \eqref{eqA1} with respect to $q$ we obtain,
  \begin{multline}\label{eqA2}
V(x)=x + xL(x) + x\left(V(x) + 2x\frac{\partial L(x)}{\partial x}\right) + \frac{x(x+1)}{(1-x)^3}L(x)\\+\frac{x}{1-x}V(x) + xQ(x)L(x) + x \left.\frac{\partial B(xq^2,q)}{\partial q}\right|_{q=1} L(x) + xQ(x)V(x).
\end{multline}
Solving \eqref{eqB1} for  $B(x,q)$  and substituting into \eqref{eqA2} and then solving the resulting expression for  $V(x)$ we obtain the desired result.
\end{proof}

The first few values of the series of $V(x)$ are
 \begin{align*}
V(x)= \sum_{n\geq 1}a(n)x^n
 &=x + 6 x^2 + 29 x^3 + 130 x^4 + 547 x^5 + 2198 x^6 + 8551 x^7 +
 32508 x^8  + \cdots.
\end{align*}

We recall that for simplicity we use $r(n)$ instead of $r_{-1}(n)$.

\begin{theorem} If $n\ge 1$, then these hold
\begin{enumerate}
\item \label{AreaPart1} If  $A_n$ is the total area of all paths in $\Q_n$, then 
 \begin{multline*}A_n=2 A_{n-1}+A_{n-2}+2 A_{n-3}+q_{n}-q_{n-1}+2n q_{n-2} +2(n-5) q_{n-3}+4 n^2-14 n+13+\\
 \sum _{i=2}^{n-4} 2(A_{i}+i q_{i} +i(i+1)) (q_{n-i-1}-q_{n-i-2}), \quad  n>4,
\end{multline*}
with the initial values $A_1=0$, $A_2=2$, $A_3=13$, and $A_4=58$.

\item \label{AreaPart2} The sequence  $a(n)$ satisfies the recursive relation 
\begin{multline*}
a(n)=3 a(n-1)-a(n-2)+A_{n-2}+2(n-1) q_{n-2}+2 n r(n-1)+2(3-n) r(n-2)\\
-4 r(n-3)+(n-1)^2+ \sum _{i=3}^{n-2} q_{i-1} (a(n-i)-a(n-i-1))\\
+\sum _{i=3}^{n-2} \left(A_{i-1}+(2 i-1) q_{i-1}+i^2\right) (r(n-i)-r(n-i-1)).
\end{multline*}
\end{enumerate}
\end{theorem}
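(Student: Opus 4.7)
The approach is to extract the recurrences from the same four-case combinatorial decompositions of $\Q$ and $\D_{-1}$ that drive the proof of Theorem~\ref{teodnegativo}, now tracking the area statistic rather than the count. Two elementary facts underlie the bookkeeping: area is additive under concatenation of paths meeting on the $x$-axis, and if $P$ has semi-length $m$ and area $\alpha$, then $UPD$ has semi-length $m+1$ and area $\alpha+(2m+1)$, since lifting shifts each of the $2m+1$ points of $P$ up by one while the new endpoints lie on the $x$-axis. A pyramid $\Delta_k=U^kD^k$ has area $k^2$.

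For part~\ref{AreaPart1}, I would decompose $Q\in\Q_n$ as $U\Delta DU\Delta'D$, $U\Delta DR$, $UR_1DR_2$, or $URDU\Delta D$ (Figure~\ref{Fig4}) and sum ``area'' instead of ``count'' over each case. Each product $q_iq_j$ that appeared in the $q_n$ recurrence of Corollary~\ref{NumberPathQn} is thereby replaced by $A_iq_j+q_iA_j$ plus a lifting correction of the form $(2\ell+1)q_iq_j$ for each sub-block wrapped in $U\cdots D$, with additional pyramid contributions $k^2$ and $k$ coming from the one- and two-pyramid cases. The same telescoping reduction that produced the $q_{n-i-1}-q_{n-i-2}$ form in Corollary~\ref{NumberPathQn} then repackages the area-weighted sums: the two symmetric contributions from the $UR_1DR_2$ case combine into a single weighted sum, the $(2i+1)q_iq_j$ lifting correction and the residual $k^2$ pyramid term are absorbed into the $i(i+1)$ coefficient, and the inner range shifts to $\sum_{i=2}^{n-4}$. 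The leftover pieces collapse into the linear part $2A_{n-1}+A_{n-2}+2A_{n-3}$ and into the boundary polynomial $q_n-q_{n-1}+2nq_{n-2}+2(n-5)q_{n-3}+4n^2-14n+13$.

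For part~\ref{AreaPart2}, the same area-tracking argument is applied to the decomposition of $P\in\D_{-1}(n)$ as $UD$, $UTD$, $U\Delta DT$, or $UQDT$ from the proof of Theorem~\ref{teodnegativo}. The first cross sum $\sum_{i=3}^{n-2}q_{i-1}(a(n-i)-a(n-i-1))$ comes from the $UQDT$ case and records the area of the lifted $T$-block weighted by the count of $Q$-blocks of semi-length $i-1$; the second, $\sum_{i=3}^{n-2}(A_{i-1}+(2i-1)q_{i-1}+i^2)(r(n-i)-r(n-i-1))$, records in its three summand pieces the intrinsic area $A_{i-1}$ of a $Q$-block of semi-length $i-1$, the lifting correction $(2(i-1)+1)q_{i-1}=(2i-1)q_{i-1}$ from wrapping that block in $U\cdots D$, and the pyramid area $i^2$ contributed by the accompanying pyramid of the $U\Delta DT$ case. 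The linear remainder collects the contributions from the non-product cases after direct evaluation of their lifting corrections and pyramid sums.

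The main technical difficulty is not the combinatorics itself, which is routine, but the algebraic bookkeeping needed to rewrite the raw ``sum of products'' expressions for $A_n$ and $a(n)$ into the telescoped form with the differences $q_{n-i-1}-q_{n-i-2}$ and $r(n-i)-r(n-i-1)$ that appear in the statement. This collapse, which mirrors the one that already occurs in Corollary~\ref{NumberPathQn}, is the step where the exact coefficients in each recurrence must be tracked and verified. A direct numerical cross-check against the initial values $A_1,A_2,A_3,A_4=0,2,13,58$ and the series expansion $V(x)=x+6x^2+29x^3+130x^4+547x^5+\dots$ from Theorem~\ref{GFAreaDDyckPath} provides the final confirmation.
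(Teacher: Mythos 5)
Your proposal follows essentially the same route as the paper: the same four-case (resp.\ three-case) decomposition from Figure~\ref{Fig4} and Theorem~\ref{teodnegativo}, area-additivity with the $2m+1$ lifting correction (the paper's trapezoid of area $2i+1$), telescoping by subtracting consecutive terms to produce the $q_{n-i-1}-q_{n-i-2}$ and $r(n-i)-r(n-i-1)$ differences, and absorption of the boundary terms via the recurrences of Corollary~\ref{NumberPathQn}. The only caveat is that you leave the coefficient bookkeeping as a stated difficulty rather than carrying it out, but the plan and the attribution of each term in the final formulas are correct.
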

 
\begin{proof}
We prove Part \eqref{AreaPart1}, constructing a recursive relation for the total area  of $\Q_{n}$. This part of the proof is divided in four cases, and we use $P\setminus T$ to denote the subpath resulting after removing the subpath $T$ from the path $P$.

\textbf{Case 1}. We observe that for a fixed $i \in \{1, 2, \dots, n-1\}$ there is exactly one path in $\Q_{n}$ of the form $(XY)^i(XY)^{n-i}$, where its  
area is equal to $i^2 +(n-i)^2$. So, the total area of these types of paths is $\sum_{i=1}^{n-1} (i^2+(n-i)^2)=n(n-1) (2 n-1)/3$.

\textbf{Case 2}. In this case, we find the area of all paths of the form $P_i:=X^iY^iQ$. Note that in $P_1$ the first pyramid is of height one and $Q \in \Q_{n-1}$
and in $P_{n-2}$ the first pyramid is of height $n-2$ and $Q \in \Q_{2}$. These give that all first pyramids $(XY)^i$ run for $i \in \{1,2, \dots, n-2\}$ and
$\Q_{j}$ runs for $j \in \{2, \dots, n-1\}$.

Now from the definition of $P_i$,  we have that for a fixed $i$ there are $q_{n-i}$ paths of form $P_i$ (that is, having a pyramid $(XY)^{i}$ in the beginning of the path).
So, the contribution to the area given by all first pyramids of the form $(XY)^i$, overall paths of the form $P_i$, is equal to $i^2\times q_{n-i}$.
This and the fact that $A_{n-j}$ is the area of  $\Q_{n-j}$, imply that the  total area of all paths of the form $P_i$ is given by $i^2 q_{n-i}+A_{i}$.
Therefore, the total area of these types of paths is $\sum_{i=1}^{n-2} i^2 q_{n-i}+\sum_{j=2}^{n-1}A_{j}$.

\textbf{Case 3}. In this case we find the area of all paths of the form $H_i:=XQ_{\ell}Y(XY)^{i}$ where $Q_{\ell} \in \Q_{n-i-1}$.
Note that similar to the Case 2, the last  pyramids $(XY)^i$ run for $i \in \{1,2, \dots, n-3\}$ and $\Q_{j}$ runs for $j \in \{2, \dots, n-2\}$.
We now observe that for a fixed $i$ there are $q_{n-i-1}$ paths of form $H_i$ (that is, having a pyramid $(XY)^{i}$ in the end of the path).
The contribution to the area given by all last pyramids of the form $(XY)^{i}$, overall paths of the form $H_i$, is equal to $i^2\times q_{n-i-1}$.

We analyze the contribution to the desired area given by $XQ_{\ell}Y= H_{n-i-1}\setminus (XY)^{n-i-1} $ with $Q_{\ell} \in \Q_{i}$. For a fixed $i \in \{2, 3, \dots, n-2\}$ there are $q_{i}$ paths of form $H_{n-i-1}$ having a first subpath of the form $XQ_{\ell}Y$.
Note that $X$ and $Y$ give rise to a trapezoid, where the two  parallel sides have lengths $2i$ and $2i+2$, giving rise to an area of $2i+1$.
So, for a fixed $i$, the contribution to the area given by all first subpaths of the form $XQ_{\ell}Y$ is equal to the area of the trapezoids plus the area of all
paths of the form $Q_{\ell}$  (these are on top of the trapezoids). That is, the area of a trapezoid multiplied by the total number of the paths of the
form $Q_{\ell}$, plus the area of all paths of the form $Q_{\ell}$. Thus, the contribution to the area given by first  subpaths of the form $XQ_{\ell}Y$
(overall paths of the form $H_i$,  for a fixed $i$),  is $((2i+1) \times q_{i} + A_i)$.

We conclude that the total area of these types of paths is $$\sum_{i=1}^{n-3} i^2\times q_{n-i-1}+\sum_{i=2}^{n-2} ((2i+1) \times q_{i} + A_i).$$

\textbf{Case 4}. Finally, we find the area of all paths of the form $T_i:=XQ^{\prime}YQ^{\prime\prime}$ where $Q^{\prime} \in \Q_{i}$ and
$Q^{\prime\prime} \in \Q_{n-i-1}$ for $i \in \{2, 3, \dots, n-3\}$. First of all, we analyze the contribution to the desired area given by all paths of the form
$Q^{\prime\prime} \in \Q_{n-i-1}$ (overall paths of the form $T_i$  for a fixed $i$). Since  $Q^{\prime} \in \Q_{i}$, we know that for a given path
$Q   \in \Q_{n-i-1}$ there are as many paths of the form  $XQ^{\prime}YQ$ as paths in $\Q_{i}$. Thus, for a fixed $i \in \{2, 3, \dots, n-3\}$ we find the
area given by all subpaths $T_i \setminus XQ^{\prime}Y$ for every $Q^{\prime} \in \Q_{i}$. Thus, the area of all subpaths of the form
$Q^{\prime\prime} \in \Q_{n-i-1}$, that is, clearly, equal to $A_{n-i-1} q_{i}$.

We now analyze the contribution to the desired area given by all subpaths of the form $XQ^{\prime}Y$. That is, the area of all subpaths
$T_i\setminus Q^{\prime\prime}$ (overall paths of the form $T_i$  for a fixed $i$). It is easy to see that for a fixed $i \in \{2, 3, \dots, n-3\}$
there are $q_{n-i-1}$  subpaths of the form $XQ^{\prime}Y$.  Note that $X$ and $Y$ give rise to a trapezoid, where the two  parallel sides
have lengths $2i$ and $2i+2$, giving rise to an area of $2i+1$.  So, the contribution to the area given by the first subpaths of the form $XQ^{\prime}Y$  
is equal to the area of the trapezoids plus the area of all  paths of the form $Q^{\prime}$  (these are on top of the trapezoids). Thus, the area of a
trapezoid multiplied by the total number of the  paths of the form $Q^{\prime}$ plus the area of all paths of the form $Q^{\prime}$ and then all of
these multiplied by the total number of paths  of the form $Q^{\prime\prime}$. Thus, the contribution to the area given by the first subpaths of the
form $XQ^{\prime}Y$  (overall paths of the form $T_i$  for a fixed $i$), is  $((2i+1) \times q_{i} q_{n-i-1} + A_i q_{n-i-1})$.

We conclude that the total area of these types of paths is $$\sum_{i=2}^{n-3} A_{n-i-1} q_{i}+\sum_{i=2}^{n-3} ((2i+1) \times q_{i} q_{n-i-1}+A_i q_{n-i-1}).$$

Adding the results from Cases 1-4, we obtain that the recursive relation for the area $A_n$ is given by
\begin{multline*}
A_n=\sum _{i=1}^{n-1} \left(i^2+(n-i)^2\right)+\sum _{i=1}^{n-2} i^2 q_{n-i}+\sum _{i=2}^{n-1} A_{i}+\sum _{i=2}^{n-3} (2 i+1) q_{i} q_{n-(i+1)}+\sum _{i=2}^{n-3} A_{i} q_{n-(i+1)}+\\
\sum _{i=2}^{n-3} A_{i} q_{n-(i+1)}+\sum _{i=2}^{n-2} A_{i}+\sum _{i=1}^{n-3} i^2 q_{n-(i+1)}+\sum _{i=2}^{n-2} (2 i+1) q_{i}.
\end{multline*}
Subtracting $A_n$ from $A_{n+1}$ and simplifying we have
 \begin{multline*}A_n=2 A_{n-1}+A_{n-2}+2 A_{n-3}+(2 n-5) q_{n-3}+(2 n-4) q_{n-2}+q_{n-1}+4 n^2-14 n+15+\\
 \sum _{i=2}^{n-4} (2 A_{i}+(2 i+1) q_{i}) (q_{n-i-1}-q_{n-i-2})+\sum _{i=2}^{n-3} \left(2 i^2-2 i+1\right) (q_{n-i}-q_{n-i-1}).
\end{multline*}
We now rearrange this expression to obtain $q_n$ (see the expression within brackets) given in Corollary \ref{NumberPathQn}

 \begin{multline*} A_n=2 A_{n-1}+A_{n-2}+2 A_{n-3}+(2 n-6) q_{n-3}+(2 n-4) q_{n-2}-q_{n-1}+4 n^2-14 n+13+\\
 \sum _{i=2}^{n-4} 2( A_{i}+ i q_{i}) (q_{n-i-1}-q_{n-i-2})+\sum _{i=2}^{n-3} 2\left( i^2- i\right) (q_{n-i}-q_{n-i-1}) \\
+[2 q_{n-1}+q_{n-2}+q_{n-3} +\sum _{i=2}^{n-4} q_{i} (q_{-i+n-1}-q_{-i+n-2})+1].
\end{multline*}
After some simplifications we obtain the desired recursive relation.

Proof of Part \eqref{AreaPart2}. This part is similar to Part \ref{AreaPart1}. However, in this proof we need to use:
$\D_{-1}(j)$,  $r(i)=|\D_{-1}(i)|$,  $\Q_{j}$, $q_{j}=|\Q_{j}|$, and $A_t$.

\textbf{Case 1}. We find the area of all paths of the form $XQY$, where $Q \in \D_{-1}(n-1)$. Note that $X$ and $Y$ give rise to a trapezoid
of area equal to $2n-1$; this area multiplied by $r(n-1)=|\D_{-1}(n-1)|$ gives that the total area of the trapezoids is $(2n-1)r(n-1)$. The total area of all
paths of the form  $XQY$ is given by the area of all trapezoids and the area of all paths that are on top of the trapezoids. That is, the area of these
types of paths is  $(2n-1)r(n-1)+ a(n-1)$.

\textbf{Case 2}. In this case, we find the area of all paths of the form $K_i:=X^iY^iQ_{\ell}$, where $Q_{\ell}\in \D_{-1}(n-i)$ and $i \in \{1,2, \dots, n-1\}$.
Since $r(n-i)=| \D_{-1}(n-i)|$, we conclude that for a fixed $i$ there are $r(n-i)$ paths of form $K_i$.  So, the contribution to the area given by all first
pyramids of the form $(XY)^i$, overall paths of the form $K_i$, is equal to
$i^2\times r(n-i)$. This and the fact that $a(n-j)$ is the area of  $\Q_{n-j}$, imply that the  total area of all paths of the form $K_i$ is given by
$i^2\times r(n-i)+a(n-i)$.  Therefore, the total area of these typee of paths is $\sum_{i=1}^{n-1} i^2\times r(n-i)+a(n-i)$.

\textbf{Case 3}. Finally, we find the area of all paths of the form $M_i:=XQ^{\prime}YD$ where $Q^{\prime} \in \Q_{i}$ and
$D \in\D_{-1}(n-i-1)$ for $i \in \{2, 3, \dots, n-2\}$. First of all, we analyze the contribution to the desired area given by all paths
$D \in\D_{-1}(n-i-1)$ (overall paths of the form $M_i$  for a fixed $i$).  Since  $Q^{\prime}\in\Q_{i}$, we know that for a given path
$D^{\prime}  \in\D_{-1}(n-i-1)$ there are as many paths of the form  $XQ^{\prime}YD^{\prime}$ as paths in $\Q_{i}$. Thus, for a fixed
$i \in \{2, 3, \dots, n-2\}$ we find the   area given by all subpaths $M_i \setminus XQ^{\prime}Y$ for every $Q^{\prime}\in\Q_{i}$. That is, the area of
all subpaths of $M_i$ of the form $D \in \Q_{n-i-1}$ is equal to $a(n-i-1) q_{i}$.

We now analyze the contribution to the desired area given by all subpaths of the form $XQ^{\prime}Y$ for every $Q^{\prime}\in\Q_{i}$ .
That is, the area of all subpaths  $M_i\setminus D$ (overall paths of the form $M_i$  for a fixed $i$).
It is easy to see that for a fixed $i \in \{2, 3, \dots, n-2\}$ there are $r(n-i-1)$ subpaths of the form $XQ^{\prime}Y$.  Note that $X$ and $Y$ give rise
to a trapezoid, where the two parallel sides have lengths $2i$ and $2i+2$, giving rise to an area of $2i+1$.  So, the contribution to the area given by
the first subpaths of the form $XQ^{\prime}Y$ is equal to the area of the trapezoids plus the area of all paths of the form $Q^{\prime}$  (these are on top of
the trapezoids). Thus, the area of a trapezoid multiplied by the total number of the  paths of the form $Q^{\prime}$ plus the area of all paths of the form
$Q^{\prime}$ and then all of these multiplied by the total number of paths  of the form $D$. Thus, the contribution to the area given by the first subpaths
of the form $XQ^{\prime}Y$  (overall paths of the form $M_i$  for a fixed $i$), is  $((2i+1) \times q_{i} r(n-i-1) + A_i r(n-i-1))$.

We conclude that the total area of these types of paths is $$\sum_{i=2}^{n-2} A_{i} r(n-i-1)+\sum_{i=2}^{n-2} (2i+1) \times q_{i} r(n-i-1).$$

Adding the results from Cases 1-3, we obtain that the recursive relation for the area $a(n)$ is given by

\begin{multline*} a(n)=a(n-1)+(2 n-1) r(n-1)+ \sum _{i=1}^{n-1} i^2 r(n-i)+\sum _{i=1}^{n-1} a(n-i) \\
+\sum _{i=2}^{n-2} q_{i} a(n-i-1)+\sum _{i=2}^{n-2} A_{i} r(n-i-1)+\sum _{i=2}^{n-2} (2i+1) q_{i} r(n-i-1).
\end{multline*}
Subtracting $a(n)$ from $a(n+1)$ and simplifying we have

\begin{multline*}
a(n)=3 a(n-1)-a(n-2)+A_{n-2}+2(n-1) q_{n-2}+(2 n-1) r(n-1)+(3-2 n) r(n-2)+(n-1)^2\\
+\sum _{i=3}^{n-2} q_{i-1} (a(n-i)-a(n-i-1))+\sum _{i=3}^{n-2} A_{i-1} (r(n-i)-r(n-i-1))\\
+\sum _{i=3}^{n-2} (2 i-1) q_{i-1} (r(n-i)-r(n-i-1))+\sum _{i=1}^{n-2} i^2 (r(n-i)-r(n-i-1)).
\end{multline*}

After some other simplifications we have that
\begin{multline*}
a(n)=3 a(n-1)-a(n-2)+A_{n-2}+2(n-1) q_{n-2}+2 n r(n-1) \\ 
+2(3-n) r(n-2)-4 r(n-3)+(n-1)^2+\sum _{i=3}^{n-2} q_{i-1} (a(n-i)-a(n-i-1))\\
+\sum _{i=3}^{n-2} \left(A_{i-1}+(2 i-1) q_{i-1}+i^2\right) (r(n-i)-r(n-i-1)).
\end{multline*}
This completes the proof.
\end{proof}

Notice that the total area of the Dyck paths (cf. \cite{Woan}) is given by $4^n-\binom{2n+1}{n}$.

\section{Acknowledgments}

The first author was partially supported by The Citadel Foundation.
The third author was partially supported by Universidad Nacional de Colombia.

\end{document}